%%%%%%%%%%%%%%%%%%%%%%%%%%%%%%%%%%%%%%%%%%%%%%%%%%%%%%%%%%%%%%%%%%%%%%%%%
%%           Topology Proceedings (spring) Sample Latex Article        %%
%%%%%%%%%%%%%%%%%%%%%%%%%%%%%%%%%%%%%%%%%%%%%%%%%%%%%%%%%%%%%%%%%%%%%%%%%

\documentclass[11pt]{amsart}   %%%%%%%Please do not change
\usepackage{latexsym}
\usepackage{amsfonts}
\usepackage{amscd}
\usepackage[all]{xypic}
\theoremstyle{plain}
%\usepackage{graphics}                  %%%%%%%

%%%%%%%%%%%%%%%%%%%%%%%%%%%%%%%%%%%%%%%%%%%%%%
%%%%%%%%%%%%%%%%%%%%%%%%%%%%%%%%%%%%%%%%%%%%%%
% Please do not change this paragraph. %%%%%%%
\setcounter{page}{1}                   %%%%%%%
\setlength{\textwidth}{4.4in}          %%%%%%%
\setlength{\textheight}{7.0in}         %%%%%%%
\setlength{\evensidemargin}{1in}       %%%%%%%
\setlength{\oddsidemargin}{1in}        %%%%%%%
\setlength{\topmargin}{.8in}           %%%%%%%
%%%%%%%%%%%%%%%%%%%%%%%%%%%%%%%%%%%%%%%%%%%%%%
%%%%%%%%%%%%%%%%%%%%%%%%%%%%%%%%%%%%%%%%%%%%%%
% Please do not change the page size   %%%%%%%
% and do not redefine the baselineskip.%%%%%%%
%%%%%%%%%%%%%%%%%%%%%%%%%%%%%%%%%%%%%%%%%%%%%%
%%%%%%%%%%%%%%%%%%%%%%%%%%%%%%%%%%%%%%%%%%%%%%

\newtheorem{definition}{Definition}

\newtheorem{lemma}{Lemma}

\newtheorem{proposition}{Proposition}

\newtheorem{theorem}{Theorem}
\numberwithin{equation}{section}

\begin{document}
%%%%%%%%%%%%%%%%%%%%%%%%%%%%%%%%%%%%%%%%%%%%%%%%%%%%%%%%%%%%
%%%%%%%%%%%%%%%%%%%%%%%%%%%%%%%%%%%%%%%%%%%%%%%%%%%%%%%%%%%%
% This a placeholder for the TOPLOGY PROCEEDINGS logo %%%%%%

%\noindent                                             %%%%%%
%\begin{picture}(150,36)                               %%%%%%
%\put(5,20){\tiny{Submitted to}}                       %%%%%%
%\put(5,7){\textbf{Topology Proceedings}}              %%%%%%
%\put(0,0){\framebox(140,34)}                          %%%%%%
%\put(2,2){\framebox(136,30)}                          %%%%%%
%\end{picture}                                         %%%%%%
%%%%%%%%%%%%%%%%%%%%%%%%%%%%%%%%%%%%%%%%%%%%%%%%%%%%%%%%%%%%
%%%%%%%%%%%%%%%%%%%%%%%%%%%%%%%%%%%%%%%%%%%%%%%%%%%%%%%%%%%%

\vspace{0.5in}

\title[Bohr Compactification
and Almost Periodic Functions] {Extension of continuous functions
on product spaces, Bohr Compactification and Almost Periodic
Functions}
\author{Salvador Hern\'andez}
\address{Universitat Jaume I, Departamento de Matem\'{a}ticas,
Campus de Riu Sec, 12071-Castell\'{o}n, Spain}
\email{hernande@mat.uji.es} \dedicatory{Dedicated to Professor Wis
Comfort}
\thanks{Research partially supported by Spanish DGES, grant BFM2000-0913,
and Generalitat Valenciana, grant CTIDIB/2002/192}
%\urladdr{empty}
\date{}
\subjclass[2000]{Primary 05C38, 15A15; Secondary 05A15, 15A18}
\keywords{Bohr compactification, almost periodic functions,
product spaces, extension of continuous functions}

\begin{abstract}
The Bohr compactification is a well known construction for
(topological) groups and semigroups. Recently, this notion has
been investigated for arbitrary structures in
\cite{har_kun:bohr_discrete} where the Bohr compactification is
defined, using a set-theoretical approach, as the maximal
compactification which is compatible with the structure involved.
Here, we give a characterization of the continuous functions
defined on a product space that can be extended continuously to
certain compactifications of the product space. As a consequence,
the Bohr compactification of an arbitrary topological structure is
obtained as the Gelfand space of the commutative Banach algebra of
all almost periodic functions. Previously, almost periodic
functions $f$ are defined in terms  of translates of $f$ with no
reference to any compactification of the underlying structure. An
application is given to the representation of isometries defined
between spaces of almost periodic functions.
\end{abstract}

\maketitle

\section{Introduccion}
Hart and Kunen in \cite{har_kun:bohr_discrete} have defined and
investigated the Bohr compactification and topology of an
arbitrary discrete structure (see also \cite{holm}). Even though
their approach to the question is set-theoretical, they also
comment that the name of Bohr is attached to this construction
stems from the fact that, for topological groups, this compact
structure can be defined via almost periodic functions that were
introduced by Harald Bohr. In fact, in
\cite{har_kun:bohr_discrete} is also implicit the question of
defining the Bohr compactification of arbitrary structures using
"appropriately" defined almost periodic functions (see
\cite[2.3.12]{har_kun:bohr_discrete}). The motivation for this
paper is to study this question. We define almost periodic
functions $f$ directly in terms of the translates of $f$ without
any previous reference to any compactification of the underlying
structure. We also characterize the continuous functions defined
on a product space that can be extended to certain
compactifications of the product space. As a consequence, it is
proved that the Bohr compactification introduced in
\cite{har_kun:bohr_discrete} is canonically equivalent to the
Gelfand or structure space associated to the commutative Banach
algebra of all almost periodic functions. In
\cite{prod1,prod2,prod3,prod4} I. Prodanov established a theory of
almost periodic function for certain general topological
structures that he called {\em continous algebraic structures}.
Prodanov's results are related to the ones in this paper and, in
fact, our definition of almost periodic function is similar to the
one considered by him. Thus, part of our results can also be
obtained using Prodanov's approach.

The basic definitions and terminology are taken from
\cite{har_kun:bohr_discrete,engel,roelcke_dierolf,rudin_fa,semadeni}.
Firstly, we recall some basic facts that will be used along the
paper.

Let $X$ be a set and $\Delta= \Delta(X)\subset X\times X$ the
diagonal on $X$. For $B,C\subset X\times X$, $S\subset X$ we
define $B\circ C= \{(x,z)\in X\times X : (x,y)\in B \ \hbox{and} \
(y,z)\in C \ \hbox{for some} \ y\in X \}$, $B^{-1}= \{(x,y) :
(y,x)\in B \}$, $B[S]=\{y\in X : (x,y)\in B \ \hbox{for some} \
x\in X \}$. A {\it uniformity} on $X$ is a set $\mu$ of subsets of
$X\times X$ satisfying the following conditions: (i) $B\supset
\Delta$ for all $B\in \mu$; (ii) if $B\in \mu$, then $B^{-1}\in
\mu$; (iii) if $B\in \mu$, there exists $C\in \mu$ such that
$C\circ C\subset B$; (iv) the intersection of two members of $\mu$
also belongs to $\mu$; (v) any subset of $X\times X$, which
contains a member of $\mu$, itself belongs to $\mu$.
%\begin{itemize}
%\item[(i)] $B\supset \Delta$ for all $B\in \mu$.
%\item[(ii)] If $B\in \mu$, then $B^{-1}\in \mu$.
%\item[(iii)] If $B\in \mu$, there exists $C\in \mu$ such that
%$C\circ C\subset B$.
%\item[(iv)] The intersection of two members
%of $\mu$ also belongs to $\mu$.
%\item[(v)] Any subset of $X\times X$, which contains a member of
%$\mu$, itself belongs to $\mu$.
%\end{itemize}

The members of $\mu$ are called {\it vicinities} (of $\Delta$). By
a {\it base} for a uniformity $\mu$ is meant a subset $\mathcal B$
of $\mu$ such that a subset of $X\times X$ belongs to $\mu$ if and
only if it contains a set belonging to $\mathcal B$. A {\it
uniform space} $\mu X$ is a pair comprising a set $X$ and a
uniformity $\mu$ on $X$. If $\mu X$ is a uniform space, one may
define a topology $\tau$ on $X$ by assigning to each point $x$ of
$X$ the neighborhood base comprised of the sets $B[x]$, $B$
ranging over the uniformity (see \cite{roelcke_dierolf}).

The most basic properties of Banach algebras can be found in
\cite{rudin_fa}. It suffices to say here that every commutative
Banach algebra with unity  $\mathcal A$ has associated a compact
space $K=K(\mathcal A)$ ({\it the Gelfand space}), which consists
of all non null complex homomorphisms of $\mathcal A$. The algebra
$\mathcal A$ is isomorphic to a subalgebra of $C(K)$ by means of a
map $f\longrightarrow \hat{f}$, given by $\hat{f}(\chi)=\chi(f)$
for all $\chi\in K$. We call $\hat{f}$ the {\it Gelfand transform}
of $f$. If $\parallel \cdot \parallel_{\infty}$ denotes the norm
of uniform convergence on $K$, it holds that $\parallel \hat{f}
\parallel_{\infty} \leq
\parallel f \parallel$ for all $f\in \mathcal A$.

 %Firstly, we recall the most relevant ones for this paper.
In what follows $\mathcal{L}$ is a set (possibly empty) of symbols
of constants and symbols of functions; every function symbol has
arity $\geq 1$. Using the symbols of $\mathcal{L}$ and the
predicate "$=$" one may construct logical formulae in the usual
way. Only the predicate "$=$" will be used here. A
\emph{structure} $\mathcal{U}$ for $\mathcal{L}$ is a non empty
set $A$ (the domain) together with elements (of) and functions
(defined on) $A$ corresponding to the symbols in $\mathcal{L}$.
E.g., when we talk about groups, it is understood that
$\mathcal{L}=\{\cdot ,i,1\}$ (symbols of the product, inverse
element and identity). Thus, groups are displayed as
$\mathcal{U}=(A;\cdot ,i,1)$.

%\begin{definition} \label{def0}
Let $\mathcal{U}$ \ be a structure for $\mathcal{L}$ and
$f:A\rightarrow X$.
If $\Phi \in \mathcal{L}$ is an $n$-ary function symbol, then $f(\Phi _{%
\mathcal{U}})$ denotes
$\{(f(a_{1},...,f(a_{n}),f(b)):(a_{1},...,a_{n},b)\in \Phi
_{\mathcal{U}}\}$.
%\end{definition}
Here $\Phi _{\mathcal{U}}$ is identified to the graph of $\Phi $.
We have that $f(\Phi _{\mathcal{U}})\subset X^{n+1}$ but is not
necessary the graph of an $n$-ary function.

%\begin{definition} \label{def1}
A \emph{topological structure} for $\mathcal{L}$ is a pair
$(\mathcal{U},\tau )$ where $\mathcal{U}$ is a structure for
$\mathcal{L}$, and $\tau $ is a topology on $A$ making all
functions in $\mathcal{U}$ continuous. We write $\mathcal{U}$ for
$(\mathcal{U},\tau )$ if the topology is understood.
%\end{definition}

%\begin{definition} \label{def2}
Let $\mathcal{U}$ and $\mathcal{V}$ be two topological structures
of $\mathcal{L}$, and $f:A\rightarrow B$. The map $f$ is a
\emph{homomorphism} from $\mathcal{U}$ to $\mathcal{V}$ iff $f$ is
continuous, $f(\Phi _{\mathcal{U}})\subset \Phi _{\mathcal{V}}$
for each function symbol $\Phi $ of $\mathcal{L}$, and
$f(c_{\mathcal{U}})=c_{\mathcal{V}}$ for each constant symbol $c$
of $\mathcal{L}$.
%\end{definition}

A \emph{compact structure} for $\mathcal{L}$ is a topological
structure $(\mathcal U,\tau)$ in which $\tau$ is a compact
Hausdorff topology.

Let $A$ be any non-empty set. A \emph{compactification} of $A$ is
a pair $(X,\varphi)$, where $X$ is a compact space,
$\varphi:A\longrightarrow X$, and $\varphi(A)$ is dense in $X$. If
$(X,\varphi)$ and $(Y,\psi)$ are two compactifications of $A$,
then $(X,\varphi)\leq_{\Gamma} (Y,\psi)$ means that
$\Gamma:Y\longrightarrow X$ is a continuous function and
$\Gamma\circ \psi=\varphi$. $(X,\varphi)\leq (Y,\psi)$ means that
$(X,\varphi)\leq_{\Gamma} (Y,\psi)$ for some $\Gamma$.

If $(\mathcal U, \tau)$ is a topological structure and
$(X,\varphi)$ is a compactification of the set $A$, then
$(X,\varphi)$ is \emph{compatible} with $(\mathcal U, \tau)$ iff
$\varphi$ is continuous and there is a topological structure
$\mathcal X$ built on the set $X$ such that $\varphi$ is a
homomorphism.

The \emph{Bohr compactification}, $(b(\mathcal
U,\tau),\Phi_{(\mathcal U,\tau)})$, of a given topological
structure $(\mathcal U,\tau)$, is the maximal compatible
compactification. The $\tau$ is omitted when it is clear from
context.

Here on, we consider a topological structure $(\mathcal{U},\tau )$
for an arbitrary but fixed set $\mathcal{L}$ of constants and
functions. If $A$ is the set underlying the structure
$\mathcal{U}$, we have that $(A,\tau )$ is a topological space
such that for each $\Phi \in \mathcal{L}$, the map $\Phi
:A^{n}\rightarrow A$ is continuous on $\ A^{n}$, here "$n$" is the arity of $%
\Phi $. In order to simplify the notation in the above situation
later on,
we consider the following symbolism: for $\Phi \in \mathcal{L}$ of arity "$n$%
", let $A_{i}=A$ for $i=1,...,n$ and consider $\Phi :A_{1}\times
A_{2}\times ...\times A_{n}\rightarrow A$ canonically defined.
Thus, with some
notational abuse, each $(a_{i_{1}},a_{i_{2}},...,a_{i_{m}})\in $ $%
A_{i_{1}}\times A_{i_{2}}\times ...\times A_{i_{m}}$, $1\leq m\leq
n$, defines the map $\Phi
_{(a_{i_{1}},a_{i_{2}},...,a_{i_{m}})}:A_{j_{1}}\times
A_{j_{2}}\times ...\times A_{j_{(n-m)}}\rightarrow A$ by $\Phi
_{(a_{i_{1}},a_{i_{2}},...,a_{i_{m}})}(a_{j_{1}},...,a_{j_{(n-m)}})=\Phi
(a_{1},a_{2},...,a_{n})$. Using this symbolism, if we take any $j$
with $1\leq j\leq n$ and an arbitrary but fixed
$\overrightarrow{x}\in \prod \{A_{i} : 1\leq i\leq n\hbox{,}\
i\not= j\}$ , we define a \textit{translation}
$t_{\overrightarrow{x}}^{\Phi }$ on $A$ ($t_{\overrightarrow{x}}$
for short if there is no possible confusion) by the rule
$t_{\overrightarrow{x}}(a)= \Phi_{\overrightarrow{x}}(a)=\Phi
(\overrightarrow{x};a)$, here on the symbol "$;$" is used to mean
that the \textit{variable} $a$ is placed at the coordinate "$j$".
We say that $t_{\overrightarrow{x}}$ is a \emph{simple
translation} on $A$. In case $\Phi $ is $1$-ary, we define the translation $%
t_{\emptyset}^{\Phi }$ to be $\Phi $. Simple translations can be
multiplied using the ordinary composition of mappings. Thus, the
set of all simple translations generates the semigroup of
(general) translations $S(\mathcal{U})$ with the composition law
defined by ordinary function composition.  To emphasize the fact
that simple translations are defined on $A$, we shall use the
symbol $t_{\overrightarrow{x}}^{\Phi }$ when
$\overrightarrow{x}\in \prod \{A_{i} : 1\leq i\leq n\hbox{,}\
i\not= j\}, \ 1\leq j\leq n$. Otherwise, we shall use the most
standard symbol $\Phi_{\overrightarrow{x}}$. In the sequel, if $X$
is a topological space, we denote by $C_{\infty}(X)$ the set of
all complex valued continuous functions on $X$ equipped with the
suppremum norm.

\begin{definition}
\label{def3}The map $f:A\rightarrow \mathbb{C}$ is said to be
almost periodic when is bounded, continuous and it holds that the
set $\{(f\circ \tau\circ \Phi_{a_{j}}):a_{j}\in A\}$ is relatively
compact in $C_{\infty}(\prod \{A_{i} : 1\leq i\leq n\hbox{,}\
i\not= j\})$ for all $\Phi \in \mathcal{L}$, all $\tau\in
S(\mathcal{U})$, and all $j$ with $1\leq j\leq n$, with $n$ being
the arity of $\Phi $.
\end{definition}

As a consequence of the definition above, for all $f$ almost periodic and $%
\tau\in S(\mathcal{U})$, it holds that $f\circ \tau$ is almost
periodic. The set of all almost periodic functions on a
topological structure $\mathcal{U}$ is denoted by
$AP(\mathcal{U})$. Obviously, when the composition of any two
simple translations yields a simple translation, we have that the
set of all simple translations is itself a semigroup. This
happens, for example, for groups and semigroups.

In principle, our approach for defining the Bohr compactification
of an arbitrary structure generalize the one given by Loomis
\cite{loomis} and Semadeni \cite{semadeni} for topological groups.
Nevertheless, the obvious complication that arise when we want to
extend the operations of a given algebraic structure to its Bohr
compactification stems from the fact that there are {\it many}
arbitrary operations of different arity in general. This means
that the set of simple translations is far from being a semigroup
and, as a consequence, the usual proofs given for groups and
semigroups do not work here. Thus, our approach is also
topological since it is based on Proposition \ref{prop3} below,
which is a result about extension of continuous functions defined
on a compact space.

\section{Main results}
The goal now is to study the properties of $AP(\mathcal{U})$. We
want $AP(\mathcal{U})$ to be a commutative Banach algebra so that
its structure or Gelfand space be isomorphic \ to $b\mathcal{U}$
as it was defined in \cite{har_kun:bohr_discrete}. The proof of
the proposition below is more or less standard, we include part of
it for the reader's sake (see \cite[\S 14.7] {semadeni}).

\begin{proposition}
\label{prop1}The set $AP(\mathcal{U})$ is a closed subalgebra of
$C_{\infty }(A)$ containing the constants.
\end{proposition}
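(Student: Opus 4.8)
The plan is to verify, in order, that $AP(\mathcal{U})$ is (i) a linear subspace of $C_\infty(A)$, (ii) closed under multiplication, (iii) contains the constant functions, and (iv) closed in the supremum norm. Throughout, the key observation is that all the relevant operations—addition, scalar multiplication, multiplication, and uniform limits—behave well with respect to the operator $f \mapsto f \circ \tau \circ \Phi_{a_j}$, which is (for each fixed data) linear and multiplicative in $f$, so the defining relative-compactness condition transfers along these operations. Concretely, fix $\Phi \in \mathcal{L}$ of arity $n$, an index $j$ with $1 \le j \le n$, and $\tau \in S(\mathcal{U})$; write $Y = \prod\{A_i : 1 \le i \le n,\ i \ne j\}$, and for $f : A \to \mathbb{C}$ let $T_{a_j}(f) = f \circ \tau \circ \Phi_{a_j} \in C_\infty(Y)$.

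For (i) and (iii): the map $a_j \mapsto T_{a_j}(c)$ sends every $a_j$ to the constant function $c$ on $Y$, so constants are almost periodic, and boundedness and continuity are clear; the ranges of $a_j \mapsto T_{a_j}(f+g)$ and $a_j \mapsto T_{a_j}(\lambda f)$ lie in $\{u+v : u \in \overline{\{T_{a_j}(f)\}},\ v \in \overline{\{T_{a_j}(g)\}}\}$ and $\lambda \cdot \overline{\{T_{a_j}(f)\}}$ respectively, which are compact as continuous images of products of compacta, so relative compactness is inherited; boundedness and continuity of $f+g$, $\lambda f$ are routine. For (ii): the range of $a_j \mapsto T_{a_j}(fg) = T_{a_j}(f) \cdot T_{a_j}(g)$ lies in the pointwise product $\overline{\{T_{a_j}(f)\}} \cdot \overline{\{T_{a_j}(g)\}}$, again compact since multiplication $C_\infty(Y) \times C_\infty(Y) \to C_\infty(Y)$ is continuous and $\{T_{a_j}(f)\}$, $\{T_{a_j}(g)\}$ are bounded (being relatively compact); and $\|fg\|_\infty \le \|f\|_\infty \|g\|_\infty$.

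For (iv): suppose $f_n \in AP(\mathcal{U})$ with $f_n \to f$ uniformly; then $f$ is bounded and continuous. To see $\{T_{a_j}(f)\}$ is relatively compact in $C_\infty(Y)$, I would show it is totally bounded: given $\varepsilon > 0$, pick $n$ with $\|f_n - f\|_\infty < \varepsilon/3$; since the map $g \mapsto g \circ \tau \circ \Phi_{a_j}$ is norm-nonincreasing (it is a composition with a fixed map), $\|T_{a_j}(f) - T_{a_j}(f_n)\|_\infty \le \|f - f_n\|_\infty < \varepsilon/3$ for every $a_j$; now cover the totally bounded set $\{T_{a_j}(f_n) : a_j \in A\}$ by finitely many $\varepsilon/3$-balls, and the corresponding balls of radius $\varepsilon$ cover $\{T_{a_j}(f) : a_j \in A\}$. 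Since $C_\infty(Y)$ is complete, total boundedness gives relative compactness.

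The only mild subtlety—and the step I would treat most carefully—is (iv), specifically ensuring the approximating argument is applied uniformly over all $a_j \in A$ simultaneously; this is exactly where the fact that precomposition with a fixed map does not increase the supremum norm does the work, so no uniformity issue actually arises. The compactness bookkeeping in (i)–(iii) is genuinely routine once one notes that continuous images of compact sets are compact and that the relevant algebraic operations on $C_\infty(Y)$ are continuous on bounded sets.
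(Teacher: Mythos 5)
Your proposal is correct. The subalgebra-and-constants verification you give in (i)--(iii) is the part the paper dismisses as standard (it only records the key identity that $f\mapsto f\circ\tau\circ\Phi_{a_j}$ is linear and multiplicative, exactly as you use it), so the real comparison is in the closedness step, where your route differs from the paper's. The paper argues sequentially: given $f^{(n)}\to f$ uniformly and a sequence $\{a_n\}$ in $A$, it extracts nested convergent subsequences of $\{f^{(j)}\circ\tau\circ\Phi_{a_n}\}$ for each $j$, passes to the diagonal subsequence, and checks it is Cauchy via the $\varepsilon/3$ triangle estimate; relative compactness then follows from completeness since $C_\infty$ of the product is metric. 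You instead prove total boundedness of $\{f\circ\tau\circ\Phi_{a_j}: a_j\in A\}$ directly: the norm-nonincreasing character of precomposition gives $\|(f-f_n)\circ\tau\circ\Phi_{a_j}\|_\infty\le\|f-f_n\|_\infty$ uniformly in $a_j$, so a finite $\varepsilon/3$-net for the totally bounded set attached to $f_n$ yields an $\varepsilon$-net for the set attached to $f$, and completeness converts total boundedness into relative compactness. The two arguments use the same two ingredients (the uniform $\varepsilon/3$ estimate and completeness of the target space), but yours avoids the diagonal bookkeeping and is slightly cleaner, while the paper's exhibits a convergent subsequence explicitly without invoking the characterization of compactness via total boundedness; both are valid, since in this metric setting relative compactness, relative sequential compactness, and total boundedness plus completeness of the ambient space all coincide. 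One small point worth making explicit in your version: the functions $f\circ\tau\circ\Phi_{a_j}$ are bounded (as $f$ is), so everything takes place in the complete space of bounded continuous functions on the product, which is what legitimizes the appeal to completeness.
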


\begin{proof}
We only check that $AP(\mathcal{U})$ is closed in $C_{\infty }(A)$. Let $%
\{f^{(n)}\}$ be a sequence in $AP(\mathcal{U})$ converging to $f$
for the
uniform convergence topology, $\Phi \in \mathcal{L}$ , $\tau\in S(\mathcal{U})$%
, and $\{a_{n}\}$ a sequence in $A$. In order to prove that $f\in
AP(\mathcal{U})$, we must show that $\{(f\circ \tau\circ
\Phi_{a_{n}})\}$ has a convergent subsequence. Let us denote
$g^{(n)}=(f^{(n)}\circ \tau\circ \Phi )$, $g=(f\circ \tau\circ
\Phi )$ and $g^{(n)}_{a}=(f^{(n)}\circ \tau\circ \Phi_{a})$, to
simplify the notation. Thus, $g^{(1)}_{a_n}=(f^{(1)}\circ
\tau\circ \Phi_{a_n})$

Given $\epsilon >0$, there is $n_{1}<\omega $ such that $\parallel
g-g^{(n)}\parallel <\frac{\epsilon }{3}$ for $n>n_{1}$, where
$\parallel .\parallel $ denotes the supremum norm. The sequence
$\{g_{a_{n}}^{(1)}\}$ will contain a convergent subsequence , say
$\{g_{a_{(n,1)}}^{(1)}\}$. There is no loss of generality in
assuming that $\parallel
g_{a_{(n,1)}}^{(1)}-g_{a_{(m,1)}}^{(1)}\parallel <\frac{1}{2}$ for all $%
n,m<\omega $. We now define inductively a collection of convergent
sequences $\{g_{a_{(n,j)}}^{(j)}\}$, $j<\omega $, satisfying:

\begin{itemize}
\item $\{g_{a_{(n,j+1)}}^{(j+1)}\}$ is a subsequence of $%
\{g_{a_{(n,j)}}^{(j)}\}$;

\item $\parallel g_{a_{(n,j)}}^{(j)}-g_{a_{(m,j)}}^{(j)}\parallel <\frac{1}{%
2^{j}}$ for all $n,m<\omega $.
\end{itemize}
Take the diagonal subsequence $\{g_{a_{(n,n)}}\}$ of
$\{g_{a_{n}}\}$, and
let $n_{0}<\omega $ such that $n_{0}\geq n_{1}$ and $\frac{1}{2^{n_{0}}}<%
\frac{\epsilon }{3}$. For any $n,m\geq n_{0}$, we have
\begin{equation*}
\begin{array}{l}
\parallel g_{a_{(n,n)}}-g_{a_{(m,m)}}\parallel \leq \\
\parallel g_{a_{(n,n)}}-g_{a_{(n,n)}}^{n_{0}}\parallel +\parallel
g_{a_{(n,n)}}^{n_{0}}-g_{a_{(m,m)}}^{n_{0}}\parallel +\parallel
g_{a_{(m,m)}}^{n_{0}}-g_{a_{(m,m)}}\parallel \leq 3\cdot
\frac{\epsilon }{3}.
\end{array}
\end{equation*}
Hence, $\{g_{a_{(n,n)}}\}$ is a Cauchy sequence in $C_{\infty
}(A)$ what completes the proof.
\end{proof}

We have just shown that $AP(\mathcal{U})$ is a commutative Banach
algebra of continuous functions on $A$ (the base space of
$\mathcal{U}$) with the supremum norm. Therefore, we can state the
following.

\begin{definition}
\label{def4} The compact Gelfand space associated to the
commutative Banach algebra $AP(\mathcal{U})$ is denoted by
$b\mathcal{U}$.
\end{definition}
Next result is our characterization of the Bohr compactification.
The proof is split in several lemmas and propositions.
\begin{theorem} \label{th01}
The space $b\mathcal{U}$ is a realization of the Bohr
compactification of the topological structure $(\mathcal{U},\tau
)$.
\end{theorem}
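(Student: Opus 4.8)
The plan is to show that $(b\mathcal{U},\varphi)$ — the Gelfand space of $AP(\mathcal{U})$ together with the canonical evaluation map $\varphi:A\to b\mathcal{U}$ sending $a$ to the homomorphism $f\mapsto f(a)$ — is both \emph{compatible} with $(\mathcal{U},\tau)$ and \emph{maximal} among compatible compactifications, hence coincides with the Bohr compactification by its very definition. That $(b\mathcal{U},\varphi)$ is a compactification of the set $A$ is standard Gelfand theory: $b\mathcal{U}$ is compact Hausdorff, $\varphi$ is continuous because every $f\in AP(\mathcal{U})$ is continuous on $A$, $\varphi(A)$ is dense because $AP(\mathcal{U})$ separates points and contains the constants (Proposition~\ref{prop1}), and the Gelfand transform identifies $AP(\mathcal{U})$ with $C_\infty(b\mathcal{U})$ in such a way that $\widehat{f}\circ\varphi=f$.

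Next I would establish maximality, which is the easy half. If $(Y,\psi)$ is any compatible compactification, then for every $g\in C_\infty(Y)$ the composition $g\circ\psi$ is bounded and continuous on $A$, and — using that $\psi$ is a homomorphism into a \emph{compact} structure $\mathcal Y$ on $Y$, so that the translates of $g$ on $Y$ form a compact family by continuity and compactness of $Y$ — one checks that $g\circ\psi\in AP(\mathcal{U})$. Thus $\psi^{*}(C_\infty(Y))\subseteq AP(\mathcal{U})$, and dualizing the inclusion of Banach algebras gives a continuous surjection $\Gamma:b\mathcal{U}\to Y$ with $\Gamma\circ\varphi=\psi$, i.e. $(Y,\psi)\le(b\mathcal{U},\varphi)$. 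So $b\mathcal{U}$ dominates every compatible compactification.

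The substantive part is \emph{compatibility}: I must build a topological structure $\mathcal X$ on $X:=b\mathcal{U}$ making $\varphi$ a homomorphism. For each constant symbol $c$ put $c_{\mathcal X}:=\varphi(c_{\mathcal U})$. For an $n$-ary function symbol $\Phi$ I need a continuous $\Phi_{\mathcal X}:X^n\to X$ extending $\Phi_{\mathcal U}$ through $\varphi$. The idea is to extend one coordinate at a time. Fixing all but the $j$-th argument at points of $A$ gives a simple translation $t^{\Phi}_{\vec x}$ on $A$; by Definition~\ref{def3} and the closure of $AP(\mathcal U)$ under composition with translations (noted right after Definition~\ref{def3}), $f\circ t^{\Phi}_{\vec x}\in AP(\mathcal U)$ for every $f\in AP(\mathcal U)$, so $t^{\Phi}_{\vec x}$ dualizes to a continuous self-map of $X$; the relative compactness condition is exactly what lets these maps vary continuously in the parameter $\vec x$ and then extend continuously to $\vec x$ ranging over $X^{\,n-1}$. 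Iterating over the coordinates, and invoking Proposition~\ref{prop3} (the promised extension result for continuous functions on a compact space applied to the product $X^n$) to handle the passage from "separately extendable" to a genuinely jointly continuous $\Phi_{\mathcal X}:X^n\to X$, produces the required operations; that $\varphi(\Phi_{\mathcal U})\subseteq\Phi_{\mathcal X}$ holds by construction and density. The main obstacle is precisely this last gluing step: because $\mathcal L$ carries many operations of mixed arity, the simple translations do not form a semigroup, so one cannot mimic the group/semigroup argument of Loomis and Semadeni and must instead feed the almost-periodicity (= relative compactness of translate families) into Proposition~\ref{prop3} to manufacture the multi-variable extension. Once $\mathcal X$ is in place, $\varphi$ is a homomorphism, $(b\mathcal U,\varphi)$ is compatible, and combined with the maximality shown above it is the Bohr compactification.
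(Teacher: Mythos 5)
Your proposal is correct in outline and follows the same overall strategy as the paper: realize $b\mathcal{U}$ as the Gelfand space of $AP(\mathcal{U})$ with the evaluation map and check density (the paper's Lemma~\ref{le01}), obtain the operations on $b\mathcal{U}$ from the almost-periodicity hypotheses via the product-extension result Proposition~\ref{prop3} (the paper's Proposition~\ref{pro1}), and get maximality by showing $C(K)\circ\rho\subseteq AP(\mathcal{U})$ for any compatible compactification $(K,\rho)$ and dualizing (the paper's Propositions~\ref{prop2} and~\ref{pro2}). The one place where your execution genuinely diverges is the compatibility step B2. You propose to apply Proposition~\ref{prop3} directly to the $b\mathcal{U}$-valued map $\delta\circ\Phi$; this is admissible (a compact Hausdorff space is a complete uniform space), but then hypothesis (b) of Proposition~\ref{prop3} requires relative compactness of $\{\delta\circ\Phi_{a_j}:a_j\in A\}$ in $C_{\infty}(\prod\{A_i:i\neq j\},\,b\mathcal{U})$, which is \emph{not} literally what Definition~\ref{def3} provides (that is a scalar condition, one $f\in AP(\mathcal{U})$ at a time); you must supply a bridge, e.g.\ total boundedness: for a finite family $f_1,\dots,f_k$ determining a basic entourage of $b\mathcal{U}$, the tuples $(f_1\circ\tau\circ\Phi_{a_j},\dots,f_k\circ\tau\circ\Phi_{a_j})$ form a totally bounded set in the product, giving a finite net simultaneously good for all $f_i$. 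Note that plain Ascoli (equicontinuity plus pointwise precompactness) does \emph{not} suffice here, because the topology is uniform convergence over a non-compact domain. The paper sidesteps this by extending each scalar function $f\circ\Phi$ via Proposition~\ref{prop3} and then defining $\Phi^{b}(\overrightarrow{p})$ through the isometric identification $AP(\mathcal{U})\cong C(b\mathcal{U})$, so no vector-valued compactness is ever needed; your route buys a more direct construction of $\Phi_{\mathcal X}$ at the cost of this extra verification. Finally, since $\delta$ need not be injective, the paper first transfers the structure to $\delta(A)$ and checks well-definedness (Lemma~\ref{le02}) before assuming $A\subseteq b\mathcal{U}$ and invoking Proposition~\ref{prop3}; your sketch passes over this point, though it is easily supplied by the same translation argument you already use.
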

According to the definition of the Bohr compactification of a
general topological structure, in order to prove the result above,
the following facts need be established:
\begin{itemize}
\item[B1] there is a map $\delta$ from $A$ into $b\mathcal{U}$
such that $\delta(A)$ is dense in $b\mathcal{U}$.

\item[B2] there is an algebraic structure on $b\mathcal{U}$
compatible with $\mathcal{U}$.

\item[B3] $b\mathcal{U}$ is the maximal compactification of
$\mathcal{U}$.
\end{itemize}
\medskip

 The verification of these properties requires some previous work.
Firstly, we state the following proposition on compact structures.

\begin{proposition}
\label{prop2}For each compact structure
$\mathcal{U}=(X,\mathcal{L})$ it holds that $C(X)$ is contained in
$AP(\mathcal{U})$.
\end{proposition}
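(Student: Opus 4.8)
The plan is to verify the three requirements in Definition~\ref{def3} for an arbitrary $f\in C(X)$. Since $X$ is compact, $f$ is automatically bounded and continuous, so only the relative compactness of the translate families needs attention. Fix a function symbol $\Phi\in\mathcal{L}$ of arity $n$, a translation $\tau\in S(\mathcal{U})$, and an index $j$ with $1\le j\le n$; put $Y=\prod\{A_i : 1\le i\le n,\ i\not=j\}$, a finite power of $X$ (when $n=1$ it is the one-point space $X^{0}$) and hence compact Hausdorff, so that $C(Y)=C_{\infty}(Y)$.

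First I would record that every $\tau\in S(\mathcal{U})$ is a continuous self-map of $X$: each simple translation arises from a continuous $\Phi$ by freezing all but one of its arguments, and the elements of $S(\mathcal{U})$ are finite composites of such maps. Therefore the function
\[
F:Y\times X\longrightarrow\mathbb{C},\qquad F(\overrightarrow{y};a)=\bigl(f\circ\tau\circ\Phi\bigr)(\overrightarrow{y};a),
\]
where ``$;$'' inserts the variable $a$ in coordinate $j$, is continuous on $Y\times X$, a compact space. Since $F(\,\cdot\,;a)=f\circ\tau\circ\Phi_{a}$ for each $a\in X$, the set whose relative compactness must be established is exactly $\{F(\,\cdot\,;a):a\in X\}\subseteq C(Y)$.

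The key step is the standard fact that, because $Y$ is compact, the map $\Lambda:X\to C_{\infty}(Y)$ given by $\Lambda(a)=F(\,\cdot\,;a)$ is norm-continuous: given $a_{0}\in X$ and $\varepsilon>0$, a compactness argument on $Y$ (for each $\overrightarrow{y}\in Y$ choose, by continuity of $F$ at $(\overrightarrow{y},a_{0})$, neighbourhoods $U_{\overrightarrow{y}}$ of $\overrightarrow{y}$ and $V_{\overrightarrow{y}}$ of $a_{0}$ on which $F$ varies by less than $\varepsilon/2$, extract a finite subcover $U_{\overrightarrow{y}_{1}},\dots,U_{\overrightarrow{y}_{k}}$ of $Y$, and set $V=\bigcap_{i}V_{\overrightarrow{y}_{i}}$) produces a neighbourhood $V$ of $a_{0}$ with $\|\Lambda(a)-\Lambda(a_{0})\|_{\infty}\le\varepsilon$ for all $a\in V$. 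Consequently $\{F(\,\cdot\,;a):a\in X\}=\Lambda(X)$ is the continuous image of a compact set, hence compact, hence relatively compact, in $C_{\infty}(Y)$. As $\Phi$, $\tau$ and $j$ were arbitrary, $f\in AP(\mathcal{U})$, and $C(X)\subseteq AP(\mathcal{U})$ follows.

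I expect no genuine obstacle here; the only points needing care are the continuity of composites of simple translations and the lemma that $\Lambda$ is norm-continuous, the latter being precisely where the compactness of $Y$ (equivalently, of $X$) enters. As an alternative one could invoke the Ascoli--Arzel\`a theorem, equicontinuity of $\{F(\,\cdot\,;a):a\in X\}$ being a consequence of the uniform continuity of $F$ on the compact uniform space $Y\times X$; I would mention this variant but carry out the image-of-a-compact-set argument, which is shorter.
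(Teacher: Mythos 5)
Your argument is correct and follows essentially the same route as the paper: for each $\Phi$, $\tau$ and $j$ one considers the map $a\mapsto f\circ\tau\circ\Phi_{a}$ from the compact space $X$ into $C_{\infty}\bigl(\prod\{X_i : i\not=j\}\bigr)$, shows it is continuous, and concludes that its image is compact. The only difference is cosmetic: where the paper cites the properness of the topology of uniform convergence on compact sets (Engelking) to get this continuity, you prove that standard lemma directly by a tube-type compactness argument, which is perfectly fine.
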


\begin{proof}
For any $f\in C(X)$, let $\Phi $ be an arbitrary but fixed element
of $\mathcal{L}$, $\tau\in S(\mathcal{U})$, and let $n$ be the
arity of $\Phi $. For any index $j$ such that $1\leq j\leq n$,
define the map $\gamma :X_{j}\rightarrow C_{\infty}(\prod \{X_{i}
: 1\leq i\leq n\hbox{,}\ i\not= j\})$ by $\gamma (a)=f_{a}$, $a\in
X_{j}$, where $X_{i}=X$ for $1\leq j\leq n$ and $f_{a}=
(f\circ \tau\circ\Phi_{a})$. %From Ascoli theorem it follows that
Taking into account that the topology of uniform convergence on
compact sets is proper (that is to say, the continuity of any map
$\varphi:X\times X\longrightarrow C$ yields automatically the
continuity of the map defined by $x\mapsto \varphi(x,.)$), it
follows that $\gamma $ is continuous on $X_{j}$ (see
\cite{engel}).

Since $X$ is compact, we obtain the compactness of $\gamma
(X_{j})$ in $C_{\infty }(\prod \{X_{i} : 1\leq i\leq n\hbox{,}\
i\not= j\})$. This completes the proof.
\end{proof}
Next lemma verifies item B1 above in the definition of the Bohr
compactification.

\begin{lemma}\label{le01}
There is a continuous map $\delta$ sending $A$ into a dense subset
of $b\mathcal{U}$.
\end{lemma}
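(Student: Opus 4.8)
The plan is to construct $\delta$ as the Gelfand-type evaluation map and then verify continuity and density using the structure of the Banach algebra $AP(\mathcal{U})$. For each $a\in A$, define $\delta(a)\in b\mathcal{U}$ to be the point of the Gelfand space corresponding to the complex homomorphism $\chi_a:AP(\mathcal{U})\to\mathbb{C}$ given by evaluation, $\chi_a(f)=f(a)$. This is a well-defined nonzero homomorphism because $AP(\mathcal{U})$ is a subalgebra of $C_{\infty}(A)$ containing the constants (Proposition \ref{prop1}), so $\chi_a(1)=1\neq 0$, and it is multiplicative and linear since pointwise operations in $AP(\mathcal{U})$ agree with those in $C_{\infty}(A)$. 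Thus $\delta$ is genuinely a map $A\to b\mathcal{U}$.

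Next I would check continuity of $\delta$. Recall the Gelfand space $b\mathcal{U}=K(AP(\mathcal{U}))$ carries the weak-$\ast$ (Gelfand) topology, a base of neighborhoods of $\chi\in b\mathcal{U}$ being given by finite intersections of sets of the form $\{\psi : |\hat f(\psi)-\hat f(\chi)|<\varepsilon\}$ for $f\in AP(\mathcal{U})$. Since $\widehat f(\delta(a))=\chi_a(f)=f(a)$, the composition $\widehat f\circ\delta$ is just $f$ itself, which is continuous on $A$ by definition of almost periodicity (every $f\in AP(\mathcal{U})$ is continuous). As the Gelfand topology on $b\mathcal{U}$ is the initial topology induced by the family $\{\widehat f : f\in AP(\mathcal{U})\}$, and each $\widehat f\circ\delta=f$ is continuous, $\delta$ is continuous.

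Finally, density of $\delta(A)$ in $b\mathcal{U}$. Here I would argue by contradiction: if $\overline{\delta(A)}\neq b\mathcal{U}$, pick $\chi_0\in b\mathcal{U}\setminus\overline{\delta(A)}$. By Urysohn's lemma on the compact Hausdorff space $b\mathcal{U}$ there is a nonzero $g\in C(b\mathcal{U})$ vanishing on $\overline{\delta(A)}$ but not at $\chi_0$. Since the Gelfand transform maps $AP(\mathcal{U})$ onto a \emph{uniformly dense} subalgebra of $C(b\mathcal{U})$ that separates points and contains constants (Stone--Weierstrass, once one notes $AP(\mathcal{U})$ is a commutative $C^\ast$-algebra since it is closed under complex conjugation — $\bar f$ is almost periodic whenever $f$ is — so the Gelfand transform is an isometric $\ast$-isomorphism onto $C(b\mathcal{U})$), one may approximate $g$ uniformly by some $\widehat f$. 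Then $|f(a)|=|\widehat f(\delta(a))|$ is uniformly small on $A$, so $\|f\|=\|\widehat f\|_\infty$ is small, forcing $|\widehat f(\chi_0)|$ small, contradicting $\widehat f\approx g$ with $g(\chi_0)\neq 0$. Hence $\delta(A)$ is dense.

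The main obstacle is the density argument, and specifically justifying that the Gelfand transform is onto (equivalently, surjectivity via Stone--Weierstrass). This needs the observation that $AP(\mathcal{U})$ is self-adjoint; one must check that the complex conjugate of an almost periodic function is almost periodic, which follows because conjugation is an isometry of $C_{\infty}$ commuting with composition by translations, so it preserves relative compactness of the translate sets in Definition \ref{def3}. Once self-adjointness is in hand, the Gelfand--Naimark theorem for commutative $C^\ast$-algebras gives surjectivity directly, and the rest is routine. One should also note the mild subtlety that $\delta$ need not be injective in general (distinct points of $A$ can be identified if no almost periodic function separates them), but this is not required for item B1.
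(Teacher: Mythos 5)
Your proposal is correct and follows essentially the same route as the paper: the evaluation map $\delta(a)[f]=f(a)$ into the Gelfand space, continuity from the weak-$\ast$ (pointwise) topology, and density via the Gelfand transform mapping $AP(\mathcal{U})$ onto (a dense subalgebra of) $C(b\mathcal{U})$ by Stone--Weierstrass together with a Urysohn-type separation argument. If anything, you are more careful than the paper, which invokes Stone--Weierstrass without noting the self-adjointness of $AP(\mathcal{U})$ that you explicitly verify.
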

\begin{proof} Since $b\mathcal{U}$ is the structure space of
$AP(\mathcal{U})$, it follows that $b\mathcal{U}$ is the set of
all multiplicative complex functionals on $AP(\mathcal{U})$
equipped with topology of pointwise convergence on
$AP(\mathcal{U})$ (cf. \cite[Appendix D]{rudin_fa}). Therefore,
the evaluation mapping $\delta:A \longrightarrow b\mathcal{U}$
defined by $\delta(a)[f]=f(a)$ for all $f\in AP(\mathcal{U})$
sends $A$ into $b\mathcal{U}$. Moreover, given that
$AP(\mathcal{U})\subset C(A)$, the map $\delta$ is clearly
continuous.

The algebra $AP(\mathcal{U})$ may be identified to a subalgebra of
$C(b\mathcal{U})$ by means of the Gelfand transform in the
following way: for each $f\in AP(\mathcal{U})$, its Gelfand
transform is a map $\widehat{f}\in C(b\mathcal{U})$ defined by
$\widehat{f}(p)=p(f)$ for all $p\in b\mathcal U$. Moreover, for
each $f\in AP(b\mathcal{U})$, it holds that
$\|\widehat{f}\|_{\infty} \leq \|f\|_{\infty}$ (here, $\| \
\|_{\infty}$ denotes the suppremum norm on either space,
$b\mathcal{U}$ and $A$). On the other hand, $AP(\mathcal U)$ is a
Banach algebra of continuous functions on $A$ equipped with the
suppremum norm. Since the evaluation mapping $\delta$ sends $A$
into $b\mathcal U$, it follows that $\|f\|_{\infty} \leq
\|\widehat{f}\|_{\infty}$ for all $f\in AP(\mathcal U)$. That is
to say, the Gelfand transform, in this case, is an isometry that
sends $AP(\mathcal U)$ isometrically into $C(b\mathcal U)$. In
fact, the Stone-Weierstrass theorem shows that the Gelfand
transform is an isometry onto $C(b\mathcal U)$. Now, observe that
$\delta(A)$ separates this algebra since, for each
$\widehat{f}\hbox{,} \widehat{g}\in C(b\mathcal U)$ with
$\widehat{f}\not= \widehat{g}$, there is $a\in A$ such that
$\widehat{f}(\delta(a))\not= \widehat{g}(\delta(a))$. This
property yields the density of the subspace $\delta(A)$ in
$b\mathcal U$.
\end{proof}
We have just verified that $b\mathcal U$ is a compactification of
$A$. In order to verify that $b\mathcal{U}$ coincides with the
Bohr compactification of $\mathcal{U}$, the next step is to equip
$\delta(A)$ with an algebraic structure $\delta(\mathcal U)$ for
$\mathcal L$ such that $\delta$ is a continuous homomorphism.
\begin{lemma} \label{le02}
The space $\delta(\mathcal A)$ may be provided with an algebraic
structure, $\delta(\mathcal U)$, for $\mathcal L$ such that the
map $\delta:A\longrightarrow \delta(A)$ is a continuous
homomorphism.
\end{lemma}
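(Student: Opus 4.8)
The plan is to transport the operations of $\mathcal{U}$ along $\delta$ in the only reasonable way and then check that the result is well defined. Concretely, for each $n$-ary function symbol $\Phi\in\mathcal{L}$ I would define a map on $\delta(A)$ by
\[
\Phi_{\delta(\mathcal U)}\bigl(\delta(a_1),\dots,\delta(a_n)\bigr):=\delta\bigl(\Phi_{\mathcal U}(a_1,\dots,a_n)\bigr),\qquad a_1,\dots,a_n\in A,
\]
and for each constant symbol $c\in\mathcal{L}$ put $c_{\delta(\mathcal U)}:=\delta(c_{\mathcal U})$. Granting that these prescriptions actually determine operations on $\delta(A)$, the inclusions $\delta(\Phi_{\mathcal U})\subset\Phi_{\delta(\mathcal U)}$ and the equalities $\delta(c_{\mathcal U})=c_{\delta(\mathcal U)}$ are immediate from the very definition, while the continuity of $\delta:A\longrightarrow\delta(A)$ was already observed in Lemma~\ref{le01}; hence $\delta$ will be a continuous homomorphism of $\mathcal{U}$ onto the structure $\delta(\mathcal U)=(\delta(A),\mathcal L)$ so constructed.

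The substantive point is well-definedness, because $AP(\mathcal U)$ need not separate the points of $A$: one may have $\delta(a_i)=\delta(b_i)$ with $a_i\neq b_i$, and it must then be shown that $\delta\bigl(\Phi_{\mathcal U}(a_1,\dots,a_n)\bigr)=\delta\bigl(\Phi_{\mathcal U}(b_1,\dots,b_n)\bigr)$, i.e. that $f\bigl(\Phi(a_1,\dots,a_n)\bigr)=f\bigl(\Phi(b_1,\dots,b_n)\bigr)$ for every $f\in AP(\mathcal U)$; recall that $\delta(a)=\delta(b)$ means exactly that $g(a)=g(b)$ for all $g\in AP(\mathcal U)$. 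I would argue by changing one coordinate at a time. Fixing $a_2,\dots,a_n$, the map $t:=t^{\Phi}_{(a_2,\dots,a_n)}$ (free variable in position $1$) is a simple translation, hence $t\in S(\mathcal{U})$, so $f\circ t\in AP(\mathcal U)$ by the remark following Definition~\ref{def3}; therefore
\[
f\bigl(\Phi(a_1,a_2,\dots,a_n)\bigr)=(f\circ t)(a_1)=(f\circ t)(b_1)=f\bigl(\Phi(b_1,a_2,\dots,a_n)\bigr),
\]
where the middle equality uses $\delta(a_1)=\delta(b_1)$. Replacing $a_2$ by $b_2$ by means of the simple translation $t^{\Phi}_{(b_1,a_3,\dots,a_n)}$, then $a_3$ by $b_3$, and so on, after $n$ such steps one reaches $f\bigl(\Phi(a_1,\dots,a_n)\bigr)=f\bigl(\Phi(b_1,\dots,b_n)\bigr)$; the $1$-ary case is just the first step, with $t^{\Phi}_{\emptyset}=\Phi$.

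I do not expect any serious obstacle beyond this telescoping computation; the only mild point to watch is that each intermediate translation must genuinely be a \emph{simple} translation of the form $t^{\Phi}_{\vec x}$ (exactly one free coordinate, all others fixed elements of $A$), which it is at every stage, so that the remark after Definition~\ref{def3} really does apply. It is worth noting that the present lemma asserts only that $\delta(A)$ carries an \emph{algebraic} structure for $\mathcal L$ making $\delta$ a continuous homomorphism; continuity of the transported operations $\Phi_{\delta(\mathcal U)}$ and their extension over all of $b\mathcal U$ are left to a later stage, to be dealt with through Proposition~\ref{prop3}.
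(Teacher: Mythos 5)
Your proposal is correct and follows essentially the same route as the paper: transport each operation via $\Phi(\delta(a_1),\dots,\delta(a_n)):=\delta(\Phi(a_1,\dots,a_n))$, reduce well-definedness to a one-coordinate-at-a-time change, and settle each step by noting that $f\circ t^{\Phi}_{\vec x}\in AP(\mathcal U)$ for the relevant simple translation together with $\delta(a_j)=\delta(b_j)$, with continuity of $\delta$ already coming from Lemma~\ref{le01}. The only difference is cosmetic: you write out the full telescoping (and the constant and $1$-ary cases), whereas the paper treats the first coordinate and remarks that the remaining coordinates are handled by iterating the same argument.
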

\begin{proof}
Let $\Phi$ be any function in $\mathcal L$ with arity $n$. We may
assume wlog that $n>1$ since the case $n=1$ is easier to deal
with. For $(a_{1},...,a_{n})\in A^{n}$, we set
$\Phi(\delta(a_{1}),...,\delta(a_{n}))=\delta(\Phi(a_{1},...,a_{n}))$.
We must check that $\Phi$ is properly defined on $\delta(A)$. Take
$(b_1,...b_n)\in A^{n}$ with $\delta(b_i)=\delta(a_i)$ for $1\leq
i\leq n$, and let us see that
$\delta(\Phi(a_{1},...,a_{n}))=\delta(\Phi(b_{1},...,b_{n}))$.
Observe that it suffices to verify that
$\delta(\Phi(a_{1},a_2...,a_{n}))=\delta(\Phi(b_{1},a_2...,a_{n}))$.
Otherwise, we split the procedure in several iterates affecting
one single coordinate each one of them. Now, since $\delta$ is the
evaluation mapping, we only need to show that
$f(\Phi(a_{1},a_2...,a_{n}))=f(\Phi(b_{1},a_2...,a_{n}))$ for all
$f\in AP(\mathcal U)$. Let $f$ be any fixed element in
$AP(\mathcal U)$ and consider the simple translation
$t=\Phi_{(a_2,...a_n)}$. We have that $f\circ t$ is in
$AP(\mathcal U)$ and $\delta(a_1)=\delta(b_1)$. Hence, $(f\circ
t)(a_1)=(f\circ t)(b_1)$. It follows that $\Phi$ is well defined
on $\delta(A)^n$. Moreover, the definition yields automatically
that $\delta$ is a continuous ($\Phi$)-homomorphism. This
completes the proof.
\end{proof}
We now verify that item B2 stated above holds. It will suffice to
prove that every operation $\Phi \in \mathcal{L}(\mathcal{U)}$
extends to a continuous operation $\Phi ^{b}\in
\mathcal{L}(b\mathcal{U})$. That is, if $\Phi $ has arity $n$, we
must extend $\Phi $, defined on $A^n$, to a continuous map $\Phi
^{b}$ defined on $(b\mathcal{U)}^{n}$. At this point, there is no
loss of generality in assuming that $A$ is a subspace of
$b\mathcal U$ since, otherwise, we would replace $A$ by
$\delta(A)$ and $\mathcal U$ by $\delta(\mathcal U)$. The first
step in this direction is a lemma concerning the extension of
mappings defined on products. We refer to \cite{engel} for this
result. Since we will need (and extend) it in the following we
include a sketch of the proof here. In the sequel, if $X$ is a
topological space and $\mu Z$ is a uniform space, we denote by
$C_{\infty }(X,\mu Z)$ the space of all continuous functions from
$X$ to $\mu Z$ equipped with the topology of uniform convergence.
That is to say, for $g\in C_{\infty }(X,\mu Z)$ a basic
neighbourhood base consists of sets of the form $N(g,B)=\{h\in
C_{\infty }(X,\mu Z):(g(x),h(x))\in B,\ x\in X,\ B\in \mu \}$.
Observe that this topology is generated by the uniformity
$\mu(\infty)$ which has a base consisting of the sets
$B^{+}=\{(f,g)\in (C_{\infty }(X,\mu Z))^{2}: (f(x),g(x))\in B,
x\in X\}$, with $B\in \mu$.

\begin{lemma}
\label{lem1}Let $X$, $Y$ and $K$ be topological spaces with $K$
being a compactification of $X$ , and let $\mu Z$ be a uniform
space. Let $f:X\times Y\rightarrow \mu Z$ be a continuous map such
that, for all $y\in Y$, the canonical map $f(.,y):X\rightarrow \mu
Z$ admits a continuous extension $\overline{f}(.,y):K\rightarrow
\mu Z$. Consider the following properties:

\begin{itemize}
\item[(a)] The family $\{f(x,.):x\in X\}$ is relatively compact in
$C_{\infty }(Y,\mu Z)$;

\item[(b)] The family $\{f(x,.):x\in X\}$ is equicontinuous;

\item[(c)] The map $\gamma :Y\rightarrow C_{\infty }(X,\mu Z)$
defined by $\gamma(y)(x)=f(x,y)$ ($x\in X$) is continuous;

\item[(d)] f extends continuously to $K\times Y$.
\end{itemize}
Then we have: $(a)\Rightarrow (b)\Rightarrow (c) \Rightarrow (d)$.
\end{lemma}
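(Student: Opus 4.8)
The three implications are of increasing depth, and I would handle them in order. For $(a)\Rightarrow(b)$: a relatively compact subset of $C_\infty(Y,\mu Z)$ is, after passing to its closure, a compact metric-like (uniform) space, hence totally bounded in the uniformity $\mu(\infty)$. Given a vicinity $B\in\mu$, pick a symmetric $C\in\mu$ with $C\circ C\circ C\subset B$ and cover $\{f(x,\cdot):x\in X\}$ by finitely many $C^{+}$-balls centred at $f(x_1,\cdot),\dots,f(x_k,\cdot)$; each of these finitely many maps is continuous on $Y$, so for a given $y_0\in Y$ there is a neighbourhood $U$ of $y_0$ on which all of them stay $C$-close to their value at $y_0$. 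A routine three-$C$ chase then gives $(f(x,y),f(x,y_0))\in B$ for every $x\in X$ and every $y\in U$, which is equicontinuity. This step is essentially the standard proof that a relatively compact family in the uniform topology is equicontinuous, adapted to a general uniform target; I would state it with a reference to \cite{engel} and keep the $C$-chase brief.

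For $(b)\Rightarrow(c)$: I must show $\gamma:Y\to C_\infty(X,\mu Z)$, $\gamma(y)(x)=f(x,y)$, is continuous, i.e. for $y_0\in Y$ and $B\in\mu$ there is a neighbourhood $U$ of $y_0$ with $(f(x,y),f(x,y_0))\in B$ for all $x\in X$, $y\in U$. But that is exactly the equicontinuity statement from (b) (equicontinuity of $\{f(x,\cdot)\}$ at $y_0$, uniformly in $x$), so this implication is almost immediate once (b) is phrased correctly.

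The implication $(c)\Rightarrow(d)$ is the substantive one and where I expect the real work. The idea: by hypothesis each $f(\cdot,y)$ extends to $\overline f(\cdot,y):K\to\mu Z$; define the candidate extension $F:K\times Y\to\mu Z$ by $F(k,y)=\overline f(k,y)$. I need to produce a single continuous map $\bar\gamma:Y\to C_\infty(K,\mu Z)$ lifting $\gamma$. The extension operator $C_\infty(X,\mu Z)\to C_\infty(K,\mu Z)$, $g\mapsto \bar g$, is defined on the subset of functions that do extend, and on that subset it is an isometry for the uniformities $\mu(\infty)$ (since $X$ is dense in $K$, a vicinity violated somewhere on $K$ is violated on a point of $X$ by continuity, and conversely). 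By (c), $\gamma(Y)$ lies in $C_\infty(X,\mu Z)$; I must check $\gamma(Y)$ lies in the domain of the extension operator — that is exactly the hypothesis that every $f(\cdot,y)$ extends — and then $\bar\gamma:=(\,\widebar{\phantom{g}}\,)\circ\gamma$ is continuous because it is the composition of the continuous $\gamma$ with the (uniformly continuous, being isometric) extension operator. Finally I would verify that $F(k,y)=\bar\gamma(y)(k)$ is jointly continuous on $K\times Y$: continuity in $k$ for fixed $y$ is built in, and the uniform-convergence topology on $C_\infty(K,\mu Z)$ together with continuity of $\bar\gamma$ gives joint continuity by the standard argument ($(k,y)$ near $(k_0,y_0)$: first move $y$ to $y_0$ using $\bar\gamma$ continuous, picking up a controlled vicinity uniformly in $k$, then move $k$ to $k_0$ using continuity of $\bar\gamma(y_0)$). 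The main obstacle is bookkeeping with the uniformity $\mu(\infty)$ on a non-metrizable target — in particular checking that the extension operator is well-defined on $\gamma(Y)$ and is an isometry — so I would isolate that as a short sublemma and otherwise defer to \cite{engel} for the classical scalar-valued case.
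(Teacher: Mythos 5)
Your argument is correct, and for the first two implications it is the same as the paper's: $(a)\Rightarrow(b)$ is the finite cover of the relatively compact family by vicinity-balls with continuous centres followed by a short chase (the paper covers by sets $N(g_l,B)$ and concludes with $B^2$, you use total boundedness and a three-$C$ chase; these are the same argument), and $(b)\Rightarrow(c)$ is, as you say, just a rephrasing of equicontinuity. Where you genuinely diverge is $(c)\Rightarrow(d)$. The paper argues pointwise: it fixes $p\in K$, restricts attention to $(X\cup\{p\})\times Y$, and combines continuity of $\overline{f}(\cdot,y)$ at $p$ with continuity of $\gamma$ to get $(\overline{f}(p,y),f(x,y'))\in B^2$ on a product neighbourhood, with no auxiliary operator. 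You instead factor the proof through the extension map $g\mapsto\overline{g}$, from the set of members of $C_{\infty}(X,\mu Z)$ that extend to $K$ into $C_{\infty}(K,\mu Z)$, prove it uniformly continuous, conclude that $\overline{\gamma}=\overline{(\cdot)}\circ\gamma:Y\rightarrow C_{\infty}(K,\mu Z)$ is continuous, and finish with the standard fact that continuity of $\overline{\gamma}$ into the uniform-convergence structure plus continuity of each $\overline{\gamma}(y)$ gives joint continuity of $(k,y)\mapsto\overline{\gamma}(y)(k)$. Both routes work; yours is more structural, and your sublemma on the extension operator is exactly the device the paper itself uses later, in step (i) of the proof of Proposition \ref{prop3}, where the restriction map $C_{\infty}(K_1\times X_3,\mu Z)\rightarrow C_{\infty}(X_1\times X_3,\mu Z)$ is shown to be a uniform isomorphism onto; the paper's direct chase in Lemma \ref{lem1} avoids introducing that operator at all. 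One small imprecision to fix in your write-up: the extension operator is not literally an isometry for $\mu(\infty)$, since a vicinity $B$ need not be closed; the correct statement is that $(g,h)\in B^{+}$ implies $(\overline{g},\overline{h})\in\overline{B}^{\,+}$, and $\overline{B}$ is contained in, say, $B\circ B\circ B$. This is uniform continuity (up to enlarging the vicinity), which is all your composition and joint-continuity arguments need.
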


\begin{proof}
$(a\Longrightarrow b)$ Let $B$ be an arbitrary vicinity in $\mu $. The family $%
\{N(g,B):g\in C(Y,\mu Z)\}$ is an open cover of $C_{\infty }(Y,\mu
Z)$. Since $C=\{f(x,.):x\in X\}$ is relatively compact, it follows
there is a finite subfamily, say $\{N(g_{l},B):1\leq l\leq m\}$,
that covers $C$. Now,
for each $y\in Y$ there is a neighbourhood $V_{(y,l)}$ such that $%
(g_{l}(y),g_{l}(y^{\prime }))\in B$ for all $y^{\prime }\in
V_{(y,l)}$. Take
$V=\cap _{l=1}^{m}V_{(y,l)}$. It is easily verified that $%
(f(x,y),f(x,y^{\prime }))\in B^{2}$ for all $y^{\prime }\in V$ and
$x\in X$. This proves the equicontinuity of $C.$

$(b\Longrightarrow c)$ Assuming that $\{f(x,.):x\in X\}$ is
equicontinuous. Given $B\in \mu $ and $y_{0}\in Y$, there is a
neighbourhood $V$ of $y_{0}$ such that $(f(x,y_{0}),f(x,y))\in B$
for all $y\in V$. That is, $\gamma (V)\subset N(\gamma(y_0),B)$
and this yields the continuity of $\gamma $.

$(c\Longrightarrow d)$ Let $\overline{f}(.,y):K\rightarrow \mu Z$
be the continuous extension of $f(.,y)$ for each $y\in Y$. In
order to prove the continuity of $\overline{f}:K\times
Y\rightarrow \mu Z$, it suffices to show that, for every $p\in K$,
the mapping $\overline{f}:(X\cup \{p\})\times Y\rightarrow \mu Z$
is continuous. Now, given $B\in \mu $, take and arbitrary point
$y\in Y$. Since $\overline{f}(.,y)$ is continuous, there is a
neighbourhood $U_{(p,y)}$ of $p$ such that for all $x\in
U_{(p,y)}$ it holds $(\overline{f}(p,y),f(x,y))\in B$. Since
$\gamma :Y\rightarrow C_{\infty }(X,\mu Z)$ is continuous, there
is a neighbourhood $V_{y}$ such
that $(f(x,y),f(x,y^{\prime }))\in B$ for all $y^{\prime }\in V_{y}$ and $%
x\in X$. Hence, for each $(x,y^{\prime })\in U_{(p,y)}\times
V_{y}$ we have
that $(f(p,y),f(x,y^{\prime }))\in B^{2}$. This proves the continuity of $%
\overline{f}$.
\end{proof}

Next proposition is one of the main results of this note. It
extends the lemma above to finite products.

\begin{proposition}
\label{prop3}Let $\{X_{i}:1\leq i\leq n\}$ be a finite family of
topological spaces and let $K_{i}$ be a fixed compactification of
$X_{i}$ for $1\leq i\leq n$. If $\mu Z$ is a complete uniform
space and $g:\prod\{X_i: 1\leq i\leq n\}\rightarrow \mu Z$ is a
continuous mapping satisfying:

\begin{itemize}
\item[(a)] $g_{\overrightarrow{x}}$ \vspace{0.05in} can be
extended to a continuous map
$\overline{g}_{\overrightarrow{x}}:K_{j}\rightarrow \mu Z$, for
all $\overrightarrow{x}\in \prod\{X_i: 1\leq i\leq n \hbox{,}\
i\not= j\}$, $1\leq j\leq n$; \vspace{0.05in} and

\item[(b)] $\{g_{x_{j}}:x_{j}\in X_{j}\}$ \vspace{0.05in} is
relatively compact in\\
$C_{\infty}(\prod\{X_i: 1\leq i\leq n \hbox{,}\ i\not=j\},\mu Z)$,
$1\leq j\leq n$.
\end{itemize}

Then $g$ can be extended to a continuous map
$\overline{g}:\prod\{K_i: 1\leq i\leq n\}\rightarrow \mu Z$.
\end{proposition}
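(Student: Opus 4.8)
\section*{Proof proposal}

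The plan is to argue by induction on $n$. For $n=1$ the statement is exactly hypothesis (a). For the inductive step, first extend the last coordinate by Lemma~\ref{lem1}: with $X:=X_n$, $Y:=\prod\{X_i:i<n\}$ and $f(x_n,\vec y):=g(\vec y,x_n)$, hypothesis (a) for $j=n$ says every slice $f(\cdot,\vec y)=g_{\vec y}$ extends continuously to $K_n$, and hypothesis (b) for $j=n$ is precisely condition (a) of Lemma~\ref{lem1}; hence $g$ extends to a continuous $g_1\colon \prod\{X_i:i<n\}\times K_n\to\mu Z$. Since $K_n$ is compact, the topology of uniform convergence on compact sets is proper (as used in Proposition~\ref{prop2}), so $g_1$ corresponds to a continuous map $\widehat g\colon \prod\{X_i:i<n\}\to C_\infty(K_n,\mu Z)$, and $C_\infty(K_n,\mu Z)$ is again a complete uniform space since $\mu Z$ is. If $\widehat g$ satisfies hypotheses (a) and (b) of the proposition for the $n-1$ factors $X_1,\dots,X_{n-1}$ with compactifications $K_1,\dots,K_{n-1}$, the induction hypothesis extends it to $\prod\{K_i:i<n\}$, and uncurrying --- evaluation being continuous --- produces a continuous $\overline g\colon \prod\{K_i:1\le i\le n\}\to\mu Z$ that restricts to $g$ by construction.

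So the content is the verification that $\widehat g$ inherits (a) and (b). Condition (b) is routine: after uncurrying, the relevant slice families of $\widehat g$ are obtained from the relatively compact families of hypothesis (b) for $g$ either by restriction to a subproduct --- a non-expansive, hence continuous, operation on spaces of continuous maps with the sup-uniformity --- or as continuous images of the compact $K_n$; one also uses that extending a relatively compact (hence equicontinuous) family of maps to a compactification again yields a relatively compact family, which follows from the $\epsilon/3$ fact that uniform convergence on a dense subset together with equicontinuity forces uniform convergence on the whole compactification. Condition (a) for $\widehat g$ is the crux, and here the induction genuinely bites: for $j<n$ it requires that each partial map $(\widehat g)_{\vec x}\colon X_j\to C_\infty(K_n,\mu Z)$ extend to $K_j$, and after uncurrying this says exactly that the two-variable map $g_1|_{X_j\times K_n}$ --- which, by uniqueness of continuous extensions, is the continuous extension to $X_j\times K_n$ of the corresponding slice of $g$ --- extends to $K_j\times K_n$. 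That is the proposition for two factors, whose hypotheses for this slice are checked directly from (a) and (b) for $g$ plus the preservation facts just mentioned.

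Hence the only step carried out by hand is the base of this secondary induction, the two-factor case: for $n\ge 3$ it comes from the main induction, but in the step from $n=1$ to $n=2$ the condition ``(a) for $\widehat g$'' --- that $\widehat g\colon X_1\to C_\infty(K_2,\mu Z)$ extend over $K_1$ --- cannot be reduced further. There, after extending one coordinate by Lemma~\ref{lem1} and currying as above, one shows directly that $\widehat g$ extends over $K_1$; for this it suffices that $\widehat g$ be uniformly continuous for the uniformity $X_1$ inherits from $K_1$, equivalently that the family $\{g(\cdot,x_2):x_2\in X_2\}$ be uniformly equicontinuous for that uniformity. This follows from its relative compactness in $C_\infty(X_1,\mu Z)$, giving equicontinuity on $X_1$, together with the fact that each member extends continuously to $K_1$ (hypothesis (a) for $j=1$): the family of these extensions is then equicontinuous on the compact $K_1$, hence uniformly equicontinuous. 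Completeness of $\mu Z$ enters precisely to ensure that all the required extensions exist. I expect the main obstacle to be this very point --- keeping control, after one coordinate has been compactified, of the partial maps in the remaining coordinates --- and the whole argument is organised so that that control is supplied, ultimately, by the two-factor case done by hand.
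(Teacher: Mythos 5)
Your overall strategy --- compactify one factor at a time via Lemma~\ref{lem1}, curry into the complete uniform space $C_{\infty}(K_n,\mu Z)$, and feed the curried map back into the proposition by induction --- is sound, and it is essentially the paper's own scheme in different clothing: the paper also extends factor by factor (first to $K_1\times X_2\times X_3$, then to $K_1\times K_2\times X_3$, etc.), and its two key steps are (i) the family of extended slices is relatively compact in $C_{\infty}(K_1\times X_3,\mu Z)$, and (ii) for a new point $p_0\in K_1$ the slice $\overline{g}_{(p_0,a)}$ is a uniform limit, along a net $x_\delta\to p_0$, of the extendable slices $g_{(x_\delta,a)}$, hence itself extendable to $K_2$ by completeness of $\mu Z$. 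Your identification of the two-factor case as the crux, and the reduction of ``hypothesis (a) for $\widehat g$'' to that case via uniqueness of continuous extensions, are correct; the bookkeeping for ``hypothesis (b) for $\widehat g$'' is also fine \emph{provided} one has the preservation fact you invoke (extensions of a relatively compact family again form a relatively compact family).

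The genuine weak point is how you justify passage of equicontinuity/compactness to the compactification, and it sits exactly at your declared crux. You argue: relative compactness of $\{g(\cdot,x_2):x_2\in X_2\}$ in $C_{\infty}(X_1,\mu Z)$ gives equicontinuity on $X_1$, and this ``together with'' member-wise extendability gives equicontinuity of the extensions on $K_1$. That implication is false as stated: on $X_1=(0,1]\subset K_1=[0,1]$ the bumps $f_n(x)=\max\bigl(0,\,1-n\lvert x-\tfrac1n\rvert\bigr)$ are equicontinuous at every point of $(0,1]$ and each extends continuously to $[0,1]$ with $\overline{f}_n(0)=0$, yet the extensions are not equicontinuous at $0$ (of course this family is not relatively compact in the supremum norm, which is precisely the point: the full strength of hypothesis (b), not just equicontinuity on $X_1$, must be used). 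The same circularity affects your $\epsilon/3$ justification of the preservation fact, since it presupposes equicontinuity on the compactification. The correct and short route is the paper's step (i): because $X_1$ is dense in $K_1$, two continuous functions on $K_1$ that are $B$-close on $X_1$ are $\overline{B}$-close on $K_1$, so restriction is a uniform embedding, total boundedness transfers to the family of extensions, and completeness of $C_{\infty}(K_1,\mu Z)$ makes that family relatively compact; then the implication (a)$\Rightarrow$(b) of Lemma~\ref{lem1}, applied on $K_1$, yields equicontinuity there, and compactness of $K_1$ upgrades this to the uniform equicontinuity needed to make $\widehat g$ uniformly continuous and hence extendable. With this repair your argument closes, and it is then equivalent in substance to the paper's proof, which settles the same point by the net/three-entourage computation of its step (ii).
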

\begin{proof}
In order to simplify the notation, we treat the case $n=3$ only,
as this is representative for the general case. The proof for
$n>3$ is obtained proceeding by induction.

It is clear from Lemma \ref{lem1} that $g$ extends with continuity
to $\overline{g}_1:K_1\times X_2\times X_3\longrightarrow \mu Z$.
Now, we prove that $g$ also extends with continuity to
$\overline{g}_2:K_1\times K_2\times X_3\longrightarrow \mu Z$.
This will suffice since the same arguments apply to extend $g$ to
the whole product of compact spaces.

Denote by $\overline{g}_{x_{2}}$ to the continuous extension to
$K_1\times X_3 $ of $g_{x_{2}}$ for each $x_{2}\in X_{2}$.

(i) \ Firstly, we prove that $\{\overline{g}_{x_{2}}:x_{2}\in
X_{2}\}$ is relatively compact in \vspace{0.05in} $C_{\infty
}(K_1\times X_3,\mu Z)$.

Indeed, the restriction mapping $r:C_{\infty}(K_1\times X_3,\mu
Z)\longrightarrow C_{\infty}(X_1\times X_3,\mu Z)$ is one-to-one
and uniformly continuous. By Lemma \ref{lem1}, each map $f$ in
$C_{\infty}(X_1\times X_3,\mu Z)$ may be extended to a continuous
map $\overline{f}$ in $C_{\infty}(K_1\times X_3,\mu Z)$. It is
easily verified that for all $B\in \mu$, if $(f,g)\in B^{+}$ then
$(\overline{f},\overline{g})\in \overline{B}^{\ +}\subset (B\circ
B)^{+}$ (here $\overline{B}^{\ +}$ denotes the closure of $B$ in
$\mu Z\times \mu Z$). Thus, $r$ is an onto uniform isomorphism.
This \vspace{0.05in} yields (i).

(ii) \ For each $p_{0}\in K_1$ and $\{x_{\delta}:\delta \in D\}$ a
net in $X_1$ converging to $p_{0}$, it holds that $\{g_{(x_{\delta
},a)}:\delta \in D\}$ converges to $\overline{g}_{(p_{0},a)}$ in
$C_{\infty }(X_{2},\mu Z)$ for all $a\in X_3$.

Indeed, since $\{\overline{g}_{x_{2}}:x_{2}\in X_{2}\}$ is
relatively compact in $C_{\infty }(K_1\times X_3,\mu Z)$, given a
vicinity $B\in \mu $, there is a finite subset $\{b_{j}:1\leq
j\leq l\}\subset X_{2}$ such that for each fixed $b\in X_{2}$
there is an $b_{j}$ with
\begin{equation}
(\overline{g}(p,b,a),\overline{g}(p,b_{j},a))\in B \tag{I}
\end{equation}
\vspace{0.05in} for all $p\in K_1$ and $a\in X_3$.

On the other hand, the map $g_{(b_{j},a)}$ can be extended with
continuity to $K_1$ for all $(b_{j},a)$, $1\leq j\leq l$.
Therefore for each fixed $a\in X_3$ and $j$ with $1\leq j\leq l$,
there is $\delta _{j}\in D$ such that, for $\delta \geq \delta
_{j}$, it holds
\begin{equation}
(\overline{g}(p_{0},b_{j},a),\overline{g} (x_{\delta
},b_{j},a))\in B\text{. } \tag{II}
\end{equation}

Let $\delta _{0}\in D$ be such that $\delta _{0}\geq \delta _{j}$
for $1\leq j\leq l$. Applying (I), for every $x_{2}\in X_{2}$
there is an element $b_{j}$ such that
\begin{equation*}
(\overline{g}(p_{0},x_{2},a), \overline{g}(p_{0},b_{j},a))\in B
\end{equation*}
and
\begin{equation*}
(\overline{g}(x_{\delta }, b_{j},a),\overline{g}(x_{\delta
},x_{2}, a))\in B
\end{equation*}

for all $\delta \in D$. Appying also (II), we obtain
\begin{equation*}
(\overline{g}(p_{0},x_{2},a), \overline{g}(x_{\delta
},x_{2},a))\in B^{3}
\end{equation*}

for all $x_{2}\in X_{2}$ and $\delta \geq \delta _{0}$. This
proves (ii).

Since $g_{(x_{\delta },a)}$ can be extended with continuity to
$\overline{g}_{(x_{\delta }, a)}:K_{2}\rightarrow \mu Z$ for all
$\delta \in D$ and $a\in X_3$, it follows from (ii) that
$\overline{g}_{(p,a)}$ can also be extended with continuity to
$K_{2}$ for all $p\in K_2$ and $a\in X_3$. The latter property
with (i) and Lemma \ref{lem1} implies that $g$ can be extended
with continuity to $K_1\times K_2\times X_3$. This completes the
proof.
\end{proof}

We are now ready to establish item B2.

\begin{proposition}\label{pro1}
Every $\Phi \in \mathcal{L}(\mathcal{U}$ $)$, of arity $n$, can be
extended to a continuous map $\Phi ^{b}$ from $(b\mathcal{U})^{n}$
into $b\mathcal{U}$.
\end{proposition}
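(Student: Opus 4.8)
The plan is to apply Proposition~\ref{prop3} to the map $g=\delta\circ\Phi:A^{n}\to b\mathcal{U}$, which is continuous because $\Phi$ is continuous (being an operation of the topological structure $\mathcal{U}$) and $\delta$ is continuous by Lemma~\ref{le01}. As noted in the remark preceding Lemma~\ref{lem1}, there is no loss of generality in assuming that $A$ is a subspace of $b\mathcal{U}$ via $\delta$, so that $g$ is simply the composite $A^{n}\to A\hookrightarrow b\mathcal{U}$ whose first map is $\Phi$. In Proposition~\ref{prop3} I would take $X_{i}=A$ and $K_{i}=b\mathcal{U}$ for $1\le i\le n$, and let $\mu Z=b\mathcal{U}$ carry its unique uniformity; since $b\mathcal{U}$ is compact Hausdorff this uniform space is complete, so all hypotheses of Proposition~\ref{prop3} are in place except conditions (a) and (b) on $g$. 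Once these are verified, $\Phi^{b}:=\overline{g}:(b\mathcal{U})^{n}\to b\mathcal{U}$ is the required extension.

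\emph{Condition (a).} Fix $j$ and $\overrightarrow{x}\in\prod\{X_{i}:i\neq j\}$. Under the identification $A\subseteq b\mathcal{U}$, the restriction $g_{\overrightarrow{x}}$ is exactly the simple translation $t=t_{\overrightarrow{x}}^{\Phi}:A\to A$. Since $t\in S(\mathcal{U})$, the remark following Definition~\ref{def3} gives $f\circ t\in AP(\mathcal{U})$ for every $f\in AP(\mathcal{U})$, so one may define $\overline{t}:b\mathcal{U}\to b\mathcal{U}$ by $\overline{t}(p)(f)=p(f\circ t)$. Because $f\mapsto f\circ t$ is a unital algebra homomorphism of $AP(\mathcal{U})$ and $p$ is a nonzero complex homomorphism, $\overline{t}(p)$ is again such a homomorphism, i.e.\ a point of $b\mathcal{U}$; $\overline{t}$ is continuous because $\widehat{f}\circ\overline{t}=\widehat{f\circ t}$ and $b\mathcal{U}$ carries the topology of pointwise convergence on $AP(\mathcal{U})$ (Lemma~\ref{le01}); and evaluating at $\delta(a)$ gives $\overline{t}(\delta(a))=\delta(t(a))$, so that $\overline{t}$ extends $g_{\overrightarrow{x}}$. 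This settles (a).

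\emph{Condition (b)}, which I expect to be the main obstacle, asks that $\{g_{x_{j}}:x_{j}\in A\}$ be relatively compact in $C_{\infty}(\prod\{X_{i}:i\neq j\},b\mathcal{U})$ for each $j$. By Lemma~\ref{le01} the Gelfand transform maps $AP(\mathcal{U})$ onto all of $C(b\mathcal{U})$, so the unique uniformity of the compact space $b\mathcal{U}$ has a base of vicinities $\{(p,q):|\widehat{f_{i}}(p)-\widehat{f_{i}}(q)|<\varepsilon,\ 1\le i\le k\}$ with $f_{1},\dots,f_{k}\in AP(\mathcal{U})$ and $\varepsilon>0$. Now for each individual $f\in AP(\mathcal{U})$, Definition~\ref{def3} says that $\{f\circ\Phi_{x_{j}}:x_{j}\in A\}=\{\widehat{f}\circ g_{x_{j}}:x_{j}\in A\}$ is relatively compact, hence totally bounded, in $C_{\infty}(\prod\{X_{i}:i\neq j\})$. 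Since a finite product of totally bounded sets is totally bounded, a routine verification then shows that $\{g_{x_{j}}:x_{j}\in A\}$ is totally bounded in $C_{\infty}(\prod\{X_{i}:i\neq j\},b\mathcal{U})$; and as $b\mathcal{U}$ is complete, so is this function space, so a totally bounded subset of it has compact closure. The delicate step is exactly this passage from the scalar-valued relative compactness built into Definition~\ref{def3} to relative compactness of the $b\mathcal{U}$-valued family, and it works only because $C(b\mathcal{U})$ consists entirely of Gelfand transforms, so that lying in a basic vicinity of $b\mathcal{U}$ is a condition on finitely many almost periodic functions at a time.

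With (a) and (b) verified, Proposition~\ref{prop3} produces a continuous map $\overline{g}:(b\mathcal{U})^{n}\to b\mathcal{U}$ extending $g$; we set $\Phi^{b}=\overline{g}$. This extension is unique since $A^{n}$ is dense in $(b\mathcal{U})^{n}$ and $b\mathcal{U}$ is Hausdorff. Carrying the construction out for every $\Phi\in\mathcal{L}(\mathcal{U})$ equips $b\mathcal{U}$ with a structure for $\mathcal{L}$ for which, by density and continuity of the extended operations, $\delta$ is a homomorphism, which is item B2.
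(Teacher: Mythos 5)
Your proof is correct, but it reaches $\Phi^{b}$ by a genuinely different route than the paper. The paper keeps Proposition~\ref{prop3} scalar-valued: for each fixed $f\in AP(\mathcal{U})$ it applies the proposition to $g=f\circ\Phi$ with $\mu Z=\mathbb{C}$ (condition (a) holds because $f\circ t^{\Phi}_{\overrightarrow{a}}\in AP(\mathcal{U})$ extends via the Gelfand transform, condition (b) is Definition~\ref{def3}), obtaining $\overline{f\circ\Phi}\in C((b\mathcal{U})^{n})$, and only afterwards assembles the point map $\Phi^{b}$ by decreeing $\overline{f}(\Phi^{b}(\overrightarrow{p}))=\overline{f\circ\Phi}(\overrightarrow{p})$ for all $f$; well-definedness and continuity of $\Phi^{b}$ then rest on the identification of $AP(\mathcal{U})$ with $C(b\mathcal{U})$ from Lemma~\ref{le01}. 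You instead apply Proposition~\ref{prop3} once, to the $b\mathcal{U}$-valued map $\delta\circ\Phi$ with $\mu Z=b\mathcal{U}$, which is legitimate since a compact Hausdorff space is complete in its unique uniformity; the price is exactly the two verifications you single out: (a) an explicit extension $\overline{t}(p)=p(\,\cdot\circ t)$ of each simple translation to $b\mathcal{U}$ (the adjoint of the composition homomorphism, essentially the mechanism behind Lemma~\ref{le02}), and (b) the transfer from the scalar relative compactness of Definition~\ref{def3} to relative compactness of the $b\mathcal{U}$-valued family, via total boundedness with respect to vicinities determined by finitely many Gelfand transforms, plus completeness of $C_{\infty}(\cdot,b\mathcal{U})$. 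Both of these steps are sound as written (for the uniformity base you do not even need surjectivity of the Gelfand transform, only that the $\widehat{f}$ separate points of $b\mathcal{U}$), and the finite-intersection/total-boundedness argument does exactly the work needed. What your route buys is that $\Phi^{b}$ appears in one application of Proposition~\ref{prop3}, so ``extends $\Phi$'' and uniqueness by density are immediate, with no separate assembly or well-definedness check; what the paper's route buys is that all compactness arguments stay in the scalar setting of Definition~\ref{def3}, at the cost of the pointwise assembly step. Note also that both proofs share the same mild reliance on reading Definition~\ref{def3} as giving relative compactness of $\{f\circ\Phi_{a_{j}}\}$ itself (i.e.\ allowing the trivial translation), and the same reduction to viewing $A$ (via $\delta$) as a subspace of $b\mathcal{U}$, so you are no worse off than the paper on those points.
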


\begin{proof}
Let $f$ be an arbitrary element of $AP(\mathcal{U})$ and let
$g=f\circ \Phi $. By the definition of $AP(\mathcal{U})$, we know
that for all $\overrightarrow{a}\in \prod \{ A_i: 1\leq i\leq n,\
i\not= j\}$, the map $g_{\overrightarrow{a}}=f\circ
t^{\Phi}_{\overrightarrow{a}}$ belongs to $AP(\mathcal{U})$ (here
$A_{i}=A$, $1\leq i\leq n$). As a consequence,
$g_{\overrightarrow{a}}$ can be extended with continuity to
$b\mathcal{U}$. We also have that the set $\{g_{a_{j}}:a_{j}\in
A_{j}\}$ is relatively compact in $C_{\infty }(\prod \{ A_i: 1\leq
i\leq n,\ i\not= j\})$. Applying Lemma \ref{prop3}, we obtain that
$g$ can be extended to a continuous function
$\overline{g}:(b\mathcal{U})^{n}\rightarrow \mathbb{C}$. Now, for
each $\overrightarrow{p}\in (b\mathcal{U})^{n}$ we define $\Phi
^{b}(\overrightarrow{p})$ to hold the equality $(\overline{f}\circ
\Phi ^{b}) (\overrightarrow{p})=\overline{g}(\overrightarrow{p})$,
for all $f\in AP(\mathcal{U})$, where $\overline{f}$ denotes the
continuous extension of $f$ to $b\mathcal{U}$. Clearly, the map
$\Phi ^{b}$ is well defined, continuous and extends $\Phi$, since
$AP(\mathcal{U})$ can be identified to the space of all continuous
complex-valued functions on $b\mathcal{U}$. This completes the
proof.
\end{proof}
Finally, we deal with item B3 in next proposition.
\begin{proposition}\label{pro2}
The compact structure $b\mathcal U$ is the maximal compatible
compactification of $\mathcal U$.
\end{proposition}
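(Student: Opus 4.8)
The plan is to establish item~B3 from the list following Theorem~\ref{th01}: since Lemmas~\ref{le01}, \ref{le02} and Proposition~\ref{pro1} already give B1 and B2 (so that $(b\mathcal{U},\delta)$ is a compatible compactification of $\mathcal{U}$), it only remains to show that it dominates every other one. Concretely, I fix an arbitrary compatible compactification $(Y,\psi)$ of $\mathcal{U}$, together with a topological structure $\mathcal{Y}$ on $Y$ making $\psi:\mathcal{U}\to\mathcal{Y}$ a homomorphism; note that $\mathcal{Y}$ is then a compact structure. The goal is to produce a continuous map $\Gamma:b\mathcal{U}\to Y$ with $\Gamma\circ\delta=\psi$, i.e. $(Y,\psi)\leq_{\Gamma}(b\mathcal{U},\delta)$.

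The key step is to show that pull-back along $\psi$ carries $C(Y)$ into $AP(\mathcal{U})$. By Proposition~\ref{prop2} we have $C(Y)\subseteq AP(\mathcal{Y})$, so it suffices to prove that $h\circ\psi\in AP(\mathcal{U})$ whenever $h\in AP(\mathcal{Y})$. For this I track how the homomorphism $\psi$ intertwines the ingredients of Definition~\ref{def3}. Since $\psi$ is a homomorphism, $\psi\circ t^{\Phi}_{\overrightarrow{x}}=t^{\Phi}_{\psi(\overrightarrow{x})}\circ\psi$ for every simple translation, hence $\psi\circ\tau=\tau'\circ\psi$ for every $\tau\in S(\mathcal{U})$ with $\tau'$ the corresponding element of $S(\mathcal{Y})$; similarly $\psi\circ\Phi_{a_{j}}=\Phi_{\psi(a_{j})}\circ\psi^{\times}$, where $\psi^{\times}$ is the product map on $\prod\{A_{i}:1\leq i\leq n,\ i\neq j\}$. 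Therefore
\[
(h\circ\psi)\circ\tau\circ\Phi_{a_{j}}=\bigl(h\circ\tau'\circ\Phi_{\psi(a_{j})}\bigr)\circ\psi^{\times},
\]
and $\{h\circ\tau'\circ\Phi_{y}:y\in Y\}$ is relatively compact in $C_{\infty}(\prod\{Y_{i}:1\leq i\leq n,\ i\neq j\})$ because $h\in AP(\mathcal{Y})$. Precomposition with the continuous map $\psi^{\times}$ is a norm-nonincreasing linear operator $C_{\infty}(\prod\{Y_{i}:i\neq j\})\to C_{\infty}(\prod\{A_{i}:i\neq j\})$, hence continuous, so it sends relatively compact sets to relatively compact sets; as $h\circ\psi$ is plainly bounded and continuous, it follows that $h\circ\psi\in AP(\mathcal{U})$.

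Granting this, the map $\psi^{\ast}:C(Y)\to AP(\mathcal{U})$, $\psi^{\ast}(h)=h\circ\psi$, is a unital algebra homomorphism (isometric, in fact, since $\psi(A)$ is dense in $Y$). Dualizing it gives a continuous map $\Gamma:b\mathcal{U}\to Y$ between the corresponding Gelfand spaces, under the canonical identification of $Y$ with the Gelfand space of $C(Y)$; explicitly $\Gamma(p)$ is the point of $Y$ at which $h\mapsto p(h\circ\psi)$ is evaluation, and continuity for the weak$^{\ast}$ topologies is the routine functorial fact. Finally, for $a\in A$ the functional $\Gamma(\delta(a))$ sends $h$ to $\delta(a)(h\circ\psi)=h(\psi(a))$, i.e. $\Gamma(\delta(a))=\psi(a)$, so $\Gamma\circ\delta=\psi$, which is what was needed. (Using the density of $\delta(A)$ together with $\Phi^{b}(\delta(\overrightarrow{a}))=\delta(\Phi(\overrightarrow{a}))$ one also checks that $\Gamma$ is a homomorphism of structures, although the definition of maximality does not require it.) I expect the main obstacle to be exactly the key step above: correctly transporting simple translations, general translations and the partially-evaluated operations $\Phi_{a_{j}}$ through $\psi$, so as to reduce the relative-compactness condition defining $AP(\mathcal{U})$ for $h\circ\psi$ to the one already known for $h$ on the compact structure $\mathcal{Y}$, where Proposition~\ref{prop2} applies; once that reduction is in place, the rest is the standard passage from a Banach-algebra homomorphism to a continuous map of Gelfand spaces.
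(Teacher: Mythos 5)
Your proposal is correct and follows essentially the same route as the paper: both construct $\Gamma:b\mathcal{U}\to Y$ as the Gelfand dual of the pullback $h\mapsto h\circ\psi$, verify $\Gamma\circ\delta=\psi$ by evaluating on $\delta(A)$, and conclude maximality. The only difference is that the paper states the key fact that $h\circ\psi\in AP(\mathcal{U})$ for $h\in C(Y)$ as a one-line observation, whereas you prove it carefully via Proposition~\ref{prop2} and the intertwining of translations by the homomorphism $\psi$ — a correct elaboration of exactly the step the paper leaves implicit.
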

\begin{proof}
Let $(K,\rho)$ be a compatible compactification of $\mathcal U$
where $\rho:A\longrightarrow K$ is the canonical injection.
Observe that, since $\rho$ is a homomorphism, we have that $f\circ
\rho\in AP(\mathcal U)$ for all $f\in C(K)$. We define the map
$\Gamma:b\mathcal{U}\longrightarrow K$ to hold the equality
$f(\Gamma(p))=p(f\circ \rho)$ for all $f\in C(K)$. The map
$\Gamma$ is clearly continuous by definition. Moreover, if $a$ is
an arbitrary element of $A$, we have that $f(\Gamma\circ
\delta)(a))=f(\Gamma(\delta(a)))=\delta(a)(f\circ \rho)=(f\circ
\rho)(a)=f(\rho(a))$, for all $f\in C(K)$. As a consequence,
$(\Gamma\circ \delta)(a)= \rho(a)$ for all $a\in A$. This proves
that $(K,\rho)\leq_{\Gamma} (b\mathcal U,\delta)$. Hence, the
maximallity of $b\mathcal U$ has been verified.
\end{proof}
To finish this section we set the proof of Theorem \ref{th01}.
\begin{proof}[\sc{Proof of Theorem \ref{th01}:}]
It suffices to combine Lemma \ref{le01}, Proposition \ref{pro1}
and Proposition \ref{pro2}.
\end{proof}
\section{Isometries}
In this section we apply the results obtained in the previous part
in order to represent linear isometries defined between spaces of
almost periodic functions. Here on, $\mathcal{L}$ denotes a set of
symbols of constants and functions and $\mathcal{U}$ and
$\mathcal{V}$ are two topological structures for $\mathcal{L}$
whose domains are the spaces $A$ and $B$, respectively. There is
no loss of generality in assuming that the sets of almost periodic
functions $AP(\mathcal{U})$ and $AP(\mathcal{V})$ separate the
points of $A$ and $B$, respectively (otherwise, we should take the
canonical quotient space, obtained by identifying the points which
may not be separated by almost periodic functions). The sets $A$
and $B$ inherite a topology as subspaces of their respective Bohr
compactifications, which is called the {\it Bohr topology}. These
topological spaces are denoted by $A^{\sharp}$ and $B^{\sharp}$.
The topological structures $\mathcal{U}$ and $\mathcal{V}$
equipped with the Bohr topologies are denoted by
$\mathcal{U}^{\sharp}$ and $\mathcal{U}^{\sharp}$, respectively.

If $\Phi$ is a $2$-ary function in $\mathcal{L}$, for every $y\in
B$, we denote by $\Phi_{y}$ (resp. $_{y}\Phi$) the map defined as
$\Phi_{y}(z)=\Phi(z,y)$ (resp. $_{y}\Phi(z)=\Phi(y,z))$ for all
$z\in B$. An isometry $T$ of $AP(\mathcal{U})$ onto
$AP(\mathcal{V})$ \textit{commutes with ($\Phi$) translations}
when there is a map $\lambda: B \longrightarrow A$ such that
$(Tf)\circ \Phi_{y}=T(f\circ \Phi_{\lambda(y)})$ for all $f\in
AP(\mathcal{V})$. We say that $T$ is {\it non-vanishing} when
$(Tf)(y)\not=0$ for all $y\in B$ if and only if $f(x)\not=0$ for
all $x\in A$.
\begin{theorem} \label{th31}
Let $T$ be a non-vanishing linear isometry of $AP(\mathcal{U})$
onto $AP(\mathcal{V})$.
\begin{itemize}
\item[(i)] There exists a continuous map $h$ of $B^{\sharp}$ onto
$A^{\sharp}$ and a continuous mapping $w:B^{\sharp}\longrightarrow
\Bbb C$, $|w|\equiv 1$, such that
$$
(Tf)(y)=w(y)\cdot f(h(y))\text{ for all } y\in B \text{ and all
}f\in AP(\mathcal{U}).
$$
\item[(ii)] If $\Phi \in \mathcal{L}$ is  2-ary, associative, has
a neutral element $1$ and $T$ commutes with translations, then
there is a singleton $b\in B$ such that $h\circ \ _{b}{\Phi}$ is a
$\Phi$-isomorphism of \ $\mathcal{V}$ onto $\mathcal{U}$.

\end{itemize}
\end{theorem}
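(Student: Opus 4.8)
The plan is to reduce part (i) to the classical Banach–Stone-type representation of isometries on $C(K)$-spaces, carried out at the level of the Bohr compactifications, and then to exploit the structure hypotheses in part (ii) to locate the distinguished point $b$.

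For part (i), I would first invoke Theorem \ref{th01}: the Gelfand transform identifies $AP(\mathcal{U})$ isometrically with $C(b\mathcal{U})$ and $AP(\mathcal{V})$ isometrically with $C(b\mathcal{V})$, where $A$ (resp. $B$) sits densely inside $b\mathcal{U}$ (resp. $b\mathcal{V}$) via $\delta$ after passing to the separated quotient. Transporting $T$ through these identifications yields a linear isometry $\widehat{T}$ of $C(b\mathcal{U})$ onto $C(b\mathcal{V})$. The non-vanishing hypothesis says precisely that $\widehat{T}$ preserves the set of functions that never vanish, i.e. that $\widehat{T}$ and $\widehat{T}^{-1}$ carry invertible elements to invertible elements; combined with linearity and the isometry property this forces $\widehat{T}$ to have the Banach–Stone form $\widehat{T}g = w_{0}\cdot (g\circ h_{0})$ for a homeomorphism $h_{0}:b\mathcal{V}\to b\mathcal{U}$ and a continuous unimodular $w_{0}$ on $b\mathcal{V}$. (One standard route: $\widehat{T}$ preserves the extreme points of the dual ball, which for $C(K)$ are the scalar multiples of evaluations; reading off where evaluations go produces $h_{0}$ and $w_{0}$, and continuity of both is automatic from the weak$^{*}$ continuity of the adjoint together with compactness. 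The non-vanishing condition is what upgrades a possibly merely affine statement to the genuinely multiplicative one and guarantees $h_{0}$ is onto rather than merely a closed embedding.) Restricting $h_{0}$ to $B\subset b\mathcal{V}$ gives a continuous map $h:B^{\sharp}\to A^{\sharp}$ onto a dense subset, and a compactness argument (the image is dense and, since $B^{\sharp}$ is dense in the compact $b\mathcal{V}$ while $h_{0}$ is a homeomorphism onto $b\mathcal{U}$, $h(B)=A$) shows $h$ is onto $A^{\sharp}$; setting $w=w_{0}|_{B^{\sharp}}$ finishes (i) since $(Tf)(y)=\widehat{Tf}(\delta(y))=w_{0}(\delta(y))\widehat{f}(h_{0}(\delta(y)))=w(y)f(h(y))$.

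For part (ii), I would use the commutation relation $(Tf)\circ\Phi_{y}=T(f\circ\Phi_{\lambda(y)})$ together with the representation from (i). Plugging the formula for $T$ into both sides and using that $AP(\mathcal{U})$ separates points of $A$, one extracts a pointwise identity relating $h$, $w$, $\lambda$ and $\Phi$; evaluating at the neutral element $1$ of $\Phi$ should pin down a single element $b\in B$ (roughly, $b$ is forced by $\lambda(1)$ or by where $h$ sends the relevant identity-translated points). Associativity of $\Phi$ is then exactly what makes $h\circ{}_{b}\Phi$ respect the operation: the translation-commutation identity, which at first only says $h$ intertwines the family of simple $\Phi$-translations on $B$ with those on $A$ after composing with $\lambda$, can be iterated using associativity so that composites of translations correspond, and this is precisely the homomorphism property of $h\circ{}_{b}\Phi$. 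Bijectivity comes from applying the same reasoning to $T^{-1}$ (also a non-vanishing isometry commuting with translations, with inverse data), yielding a two-sided inverse, so $h\circ{}_{b}\Phi$ is a $\Phi$-isomorphism of $\mathcal{V}$ onto $\mathcal{U}$.

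The main obstacle I expect is two-fold. First, in (i), making the Banach–Stone step genuinely rigorous in the non-unital-looking but actually unital setting — $AP$ contains the constants, so $C(b\mathcal{U})$ is unital, but one must verify carefully that ``non-vanishing'' translates to preservation of invertibles and that this suffices to get the multiplicative (not just linear-modular) form, and that the resulting $h_0$ is a homeomorphism onto all of $b\mathcal{U}$. Second, and harder, in (ii): disentangling the commutation identity to produce the \emph{single} point $b$ and checking that $h\circ{}_{b}\Phi$ is simultaneously a homomorphism for $\Phi$ — the associativity and neutral-element hypotheses must be used in exactly the right way, and one has to be careful that the identity obtained holds on $B$ and not merely on a dense subset, which is where separation of points by $AP(\mathcal{V})$ and the continuity statements from (i) are needed. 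Routine verifications (continuity of $w$, that $|w|\equiv 1$, that compositions of extensions agree) I would relegate to brief remarks.
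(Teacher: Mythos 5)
Your route for (i) is the same as the paper's (Gelfand identification of $AP(\mathcal{U})$ with $C(b\mathcal{U})$ and $AP(\mathcal{V})$ with $C(b\mathcal{V})$, then Banach--Stone), but the step you actually need is not established. Banach--Stone already gives, with no extra hypothesis, a homeomorphism $h_0:b\mathcal{V}\to b\mathcal{U}$ and a unimodular $w_0$ with $\widehat{T}g=w_0\cdot(g\circ h_0)$; so your claim that the non-vanishing condition is what "upgrades" the form or makes $h_0$ onto is misplaced. What is genuinely at stake is that $h_0$ maps $B$ \emph{into} $A$ and \emph{onto} $A$, so that restriction gives a map of $B^{\sharp}$ onto $A^{\sharp}$. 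Your parenthetical argument for this is a non sequitur: density of $h_0(B)$ in $b\mathcal{U}$ cannot give $h_0(B)=A$, since $A$ and $h_0(B)$ are both merely dense. This is precisely where the non-vanishing hypothesis (phrased on $A$ and $B$, not on the compactifications) must be exploited; the paper itself disposes of (i) in one line by citing \cite{iso_ap} for this argument, but your sketch neither does the work nor correctly locates where the hypothesis enters.

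For (ii) you offer a plan rather than a proof, and the concrete mechanism is missing or wrong. The paper normalizes $w\equiv 1$, chooses $b$ with $h(b)=1_{\mathcal{U}}$ (not $b$ "forced by $\lambda(1)$" --- in fact the argument yields $\lambda(1_{\mathcal{V}})=1_{\mathcal{U}}$, so that guess points at the wrong object), and introduces the auxiliary isometry $Rf(y)=Tf(\Phi(b,y))$. Associativity is used exactly once, to check that $R$ still commutes with translations; then, writing $k=h\circ{}_b\Phi$ so that $Rf=f\circ k$, short computations with the commutation identity and the neutral element give $k(1_{\mathcal{V}})=1_{\mathcal{U}}$, $k=\lambda$, and finally $k(\Phi(z,y))=\Phi(k(z),k(y))$, with separation of points by $AP(\mathcal{V})$ converting equalities of functions into equalities of points. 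None of these identities appear in your sketch, and "iterating associativity so that composites of translations correspond" does not by itself produce them. Your proposal to get bijectivity by applying the same reasoning to $T^{-1}$ also needs justification: the hypothesis only says $T$ commutes with translations via some $\lambda:B\to A$, and it is not given that $T^{-1}$ commutes with translations via a map in the other direction (note the paper's own proof only verifies the homomorphism property of $k$). So the key ideas that make (ii) work are absent from the proposal.
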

\begin{proof}
The statement (i) is consequence of the Banach-Stone theorem and
the fact that $T$ preserves non-vanishing functions (see
\cite{iso_ap} to find an analogous result for topological groups).
Thus, only (ii) need to be checked. There is no loss of generality
in assuming that $w=1$ (otherwise, we should replace $T$ by the
isometry $\tilde{T}(f)=w^{-1}\cdot T(f)$ \ ). Denote by
$1_{\mathcal{U}}$ the neutral element in $\mathcal{U}$ and let $b$
be equal to $h^{-1}(1_{\mathcal{U}})$. We define the isometry $R$
of $AP(\mathcal{U})$ onto $AP(\mathcal{V})$ by
$Rf(y)=Tf(\Phi(b,y))$ for all $y\in B$. It is easy to check that
$R$ also commute with the $\Phi$-translations. Indeed,

%\noindent
\begin{equation*}
\begin{array}{ll}
((Rf)\circ \Phi_{z})(y)=(Rf)(\Phi(y,z))=
(Tf)(\Phi(b,\Phi(y,z)) &=\\
(Tf)(\Phi(\Phi(b,y),z)) \ \ \ \ = \ \ \ \ ((Tf)\circ~\Phi_{z})(\Phi(b,y)) \ &=\\
T(f\circ \Phi_{\lambda(z)})(\Phi(b,y)) \ \ = \ \ \ \ R(f\circ
\Phi_{\lambda(z)})(y).
\end{array}
\end{equation*}
Applying (i) to $R$ we obtain a map $k:\mathcal{V} \longrightarrow
\mathcal{U}$ such that $Rf=f\circ k$ for all $f\in
AP(\mathcal{U})$. Since $AP(\mathcal{V})$ separates the points of
$B$, it is readily seen that $k(y)=h(\Phi(b,y))$ for all $y\in B$.
Thus,
$k(1_{\mathcal{V}})=h(\Phi(b,1_{\mathcal{V}}))=h(b)=1_{\mathcal{U}}$.
On the other hand

\noindent $f(k(y))=Rf(y)=Rf(\Phi(1_{\mathcal{V}}))=((Rf)\circ
\Phi_{y})(1_{\mathcal{V}})= R(f\circ
\Phi_{\lambda(y)})(1_{\mathcal{V}})=(f\circ \Phi_{\lambda
(y)})(k(1_{\mathcal{V}}))= (f\circ \Phi_{\lambda
(y)})(1_{\mathcal{U}})=f(\Phi(1_{\mathcal{U}},\lambda (y))=
f(\lambda (y))$.

Again, this means that $k(y)=\lambda (y)$ for all $y\in B$.
Finally

\noindent $f(k(\Phi(z,y)))=Rf(\Phi(z,y))=((Rf)\circ
\Phi_{y})(z)=R(f\circ \Phi_{k(y)})(z)= (f\circ
\Phi_{k(y)})(k(z))=f(\Phi(k(z),k(y))$.

That is to say, $k(\Phi(z,y))=\Phi(k(z),k(y))$ for all $z,y \in
B$. This proves that $k$ is a $\Phi$-isomorphism.
\end{proof}
{\bf Acknowledgement}: We would like to thank the referee of an
original draft of this paper for a number of very helpful
comments.


\begin{thebibliography}{99}
\bibitem{engel}
{Ryszard Engelking,} \textit{General {T}opology}. Heldermann
Verlag, Berlin. 1989.
\bibitem{har_kun:bohr_discrete}
{J. E. Hart} and { K. Kunen}, \textit{Bohr compactifications of
discrete structures,} \textrm{Fund. Math.} {\bf 160} (1999),
101-151.
\bibitem{iso_ap}
{S. Hern\'andez}, \textit{Group homomorphisms induced by
isometries of spaces of almost periodic functions,}  to appear in
Topology Proc.
\bibitem{holm}
{P. Holm}, \textit{On the Bohr compactification}, \textrm{Math.
Ann.}, {\bf 156} (1964), 34-46.
\bibitem{loomis}
{L.H. Loomis}, \textit{An Introduction to Abstract Harmonic
Analysis}. Van Nostrand, New York, 1953.
\bibitem{prod1}
{I. Prodanov}, \textit{Compact representations of continuous
algebraic structures}, Annuaire Univ. Sofia Fac. Math. {\bf 60}
1965/66, 139-148 (1967) (in Bulgarian).
\bibitem{prod2}
{I. Prodanov}, \textit{Compact representations of continuous
algebraic structures}, General Topology and its Relation to Modern
Analysis and Algebra II. Proc. of the Second Prague Topology
Symposium 1966, Prague, Academia 1967, 290-294 (in Russian).
\bibitem{prod3}
{I. Prodanov}, \textit{Minimal compact representations of
algebras}, Annuaire Univ. Sofia Fac. Math. M\'ec {\bf 67}
(1972/73) 507-542.
\bibitem{prod4}
{I. Prodanov}, \textit{Compact representations of continuous
algebraic structures}, Dr. of Sciences Dissertation, Sofia
University, 1980.
\bibitem{roelcke_dierolf}
W. Roelcke and S. Dierolf, \textit{Uniform structures in
topological groups and their quotients}. McGraw-Hill, New York,
1981.
\bibitem{rudin_fa}
{W. Rudin} \textit{Fourier Analysis on Groups}, Wiley, New York.
1990.
\bibitem{semadeni}
{Z. Semadeni} \textit{Banach Spaces of Continuous Functions}, PWN,
Warsaw, 1971.

\end{thebibliography}
\end{document}